\newenvironment{rezabib} 
{\bibdiv\biblist\setupbib} 
{\endbiblist\endbibdiv} 
\def\setupbib{\catcode`@=\active} 
\def\gatherkey#1#2{\gatherkeyaux{#1}#2\gatherkeyaux} 
\def\gatherkeyaux#1#2,#3\gatherkeyaux{\bib{#2}{#1}{#3}}
\renewcommand{\bar}[1]{\overline{#1}}    % Overline looks nicer for conjugation...
\newcommand*{\propref}[1]{Proposition~\ref{#1}}
\newcommand*{\lemref}[1]{Lemma~\ref{#1}}
\newcommand*{\thmref}[1]{Theorem~\ref{#1}}
\newcommand*{\tabref}[1]{Table~\ref{#1}}
\newtheorem{theorem}{Theorem}[section]
\newtheorem{lemma}[theorem]{Lemma}
\newtheorem{proposition}[theorem]{Proposition}
\newtheorem{Remarks}[theorem]{Remarks}
\newcommand{\Leg}[2]{\genfrac{(}{)}{}{}{#1}{#2}}
\newenvironment{psmallmatrix}
  {\left(\begin{smallmatrix}}
  {\end{smallmatrix}\right)}
\numberwithin{equation}{section}
\begin{document}

\begin{center} 

%\title{On a theorem of van der Blij} 
%\title{On the number of representations of binary quadratic forms of odd class number}

\title{Binary quadratic forms of odd class number}

\author{Amir Akbary}
\address{Department of Mathematics and Computer Science, University of Lethbridge, Lethbridge, Alberta, T1K 3M4}
\email{amir.akbary@uleth.ca}

\author{Yash Totani} 
\address{Department of Pure Mathematics,
University of Waterloo, Waterloo, Ontario, N2L 3G1} 
\email{ytotani@uwaterloo.ca}

\subjclass[2020]{11E25, 11E45} 
\keywords{Number of representations of integers by binary quadratic forms, theta functions, product expansions, eta quotients}

\thanks{This research was supported by the NSERC discovery grant RGPIN-2021-02952.}
%Research of the first author is partially supported by NSERC. The second author is currently an NCTS postdoctoral fellow; he was supported by a PIMS postdoctoral fellowship and the University of Lethbridge during part of this research. 
%} 

\date{\today}

\begin{abstract} 
Let $-D$ be a fundamental discriminant. We express the number of representations of an integer by a positive definite binary quadratic form of discriminant $-D$ with an odd class number $h(-D)$ as a rational linear expression involving the Kronecker symbol $\Leg{-D}{.}$ and the Fourier coefficients of certain cusp forms. We prove these cusp forms have eta quotient representations only if $D=23$. This provides, using theta functions, a generalization of a result of F. van der Blij from 1952 for binary quadratic forms of discriminant $-23$ to the case of forms of discriminant $-D$ with odd $h(-D)$. We also classify all the eta quotients of prime level $D$ which are half the difference of two theta functions of level $D$.

\end{abstract} 
\maketitle 
\end{center}

\section{Introduction}
Let $a(n, Q)$ be the number of representations of a positive integer $n$ by an integral quadratic form 
$$Q(x_1, x_2, \dots, x_m)=\sum_{i=1}^{m}\sum_{j=i}^{m} a_{ij} x_i x_j, ~~~a_{ij}\in \mathbb{Z}.$$ 
The problem of finding formulas for $a(n, Q)$ fascinated number theorists for many centuries. Finding exact formulas, except for some special cases, seems unlikely. However,  the asymptotics for $a(n, Q)$ is often known. An instance of an exact formula was discovered in 1952. More specifically, 
F. van der Blij, in \cite{VDB}, gave exact formulas for $a(n, Q)$ for all three equivalence classes of binary quadratic forms $Q(x, y)$ of discriminant -23 (see Section \ref{binqua} for the basics of binary quadratic forms). He proved that 
\begin{equation}
\label{x1}
     a(n,Q_0)=\frac{2}{3}\sum_{d\mid n} 
\Leg{d}{23}+\frac{4}{3}t(n)
 \end{equation} 
 and
 \begin{equation}
 \label{x2}
     a(n,Q_1)=a(n,\bar{Q}_1)=\frac{2}{3}\sum_{d\mid n} 
\Leg{d}{23}-\frac{2}{3}t(n),
\end{equation}
where $\Leg{.}{23}$ is the Legendre symbol mod 23, the quadratic forms $Q_i's$ are 
%\begin{align*}
 \begin{equation}
 \label{r-forms}
  Q_0(x, y)=x^2+xy+6y^2,~
    Q_1(x, y)=2x^2+xy+3y^2,~
    \bar{Q}_1(x, y)=2x^2-xy+3y^2,
    \end{equation}
%    \end{align*}
and the coefficients $t(n)$ arise out of the formal identity
%out of the cusp form given by 
\begin{equation}
\label{-23disc}
\sum_{n=1}^\infty t(n)q^n=q\prod_{n=1}^\infty (1-q^n)(1-q^{23n}).    
\end{equation}
Note that, from \eqref{-23disc}, $t(n)$ can be computed explicitly, and thus the above results can be used for the computation $a(n, Q)$ of any positive definite quadratic form $Q(x, y)$ of discriminant $-23$ for any value $n$. In \cite[Section 3]{VDB}, using \eqref{-23disc}, some specific properties of $t(n)$ are derived and among other things it is shown that $t(n)\equiv \tau(n)$ (mod 23), where $\tau(n)$ is the Ramanujan $\tau$-function.
The proof in \cite{VDB} is based on combinatorial arguments. More specifically, using Euler's pentagonal number theorem, 
%in \cite{VDB}, 
it is shown that
\begin{equation*}
%\label{-23disc}
\sum_{n=1}^\infty t(n)q^n=\sum_{m, n= -\infty}^{+\infty} (-1)^{m+n} q^{\frac{1}{24}(6m+1)^2+\frac{23}{24} (6n+1)^2}.    
\end{equation*}
This gives a relation between the values of $t(n)$ and the number of certain solutions of the Diophantine equation $u^2+23v^2=24 n$, which from it is deduced that 
$a(n, Q_0)-a(n, Q_1)=2t(n)$. This together with $a(n, Q_1)=a(n, \bar{Q}_1)$ and  $a(n, Q_0)+a(n, Q_1)+a(n, \bar{Q}_1)= 2 \sum_{d\mid n} \Leg{d}{23}$ imply the result.

In this paper, we revisit this classical theorem from the perspective of the theory of modular forms and theta functions. 
First, we note that $-23$ is one of the 16 negative fundamental discriminants of class number 3, where the class number $h(-D)$ is the cardinality of the form class group of discriminant $-D$. Our first result generalizes van der Blij's result for the positive definite quadratic forms of discriminant $-23$ to the fundamental discriminants with odd class numbers. From now on, $q=e^{2\pi i z}$ where $z$ is in the upper half-plane.

\begin{theorem}
\label{main-1}
Let the integer $-D<0$ be a fundamental discriminant and assume that $h(-D)=2k+1$ for integer $k\geq 0$.
%Let $K=\mathbb{Q}(\sqrt{-D})$ be an imaginary quadratic field with odd class number $2k+1$, and assume that $D\neq 2, 4$. 
Then, there are $2k+1$ distinct reduced forms $Q_0(x, y)$, $Q_1(x, y)$, $\bar{Q}_1 (x, y)$, $\ldots$, $Q_k(x, y)$, and $\bar{Q}_k(x, y)$, where $Q_0(x, y)$ represents the principal form and $\bar{Q}_i(x, y)=a_i x^2-b_i xy+c_i y^2$ is the conjugate form corresponding to $Q_i(x, y)=a_i x^2+b_i xy+c_i y^2$. Let $w=6$ if $D=3$, $w=4$ if $D=4$, and $w=2$ otherwise.
Let $a(n, Q)$ be the number of representations of $n$ by the quadratic form $Q(x, y)$. The following holds:

(i) If $k=0$, we have $$a(n, Q_0)={w} \sum_{d\mid n} \Leg{-D}{d},$$
where $\Leg{-D}{.}$ is the Kronecker symbol.

(ii) For $k\geq 1$ and $1\leq r\leq k$, let
\begin{equation}
\label{fdr}
F_{D,r}(z):= \frac{1}{2} \left(\sum_{a, b\in \mathbb{Z}} q^{Q_0(a, b)}-\sum_{a, b\in \mathbb{Z}} q^{Q_r(a, b)} \right) =\sum_{n=1}^{\infty} t_r(n) q^n.
\end{equation}
%where $q=e^{2\pi iz}$ and $z$ is in the upper half plane.
Then $F_{D, r}(z)$ is a cusp form of weight $1$, level $D$, and character $\Leg{-D}{\cdot}$ with $t_r(1)=1$. Moreover,
\begin{equation}
\label{x3}
a(n, Q_0)=\frac{2}{2k+1} \sum_{d\mid n} \Leg{-D}{d}+\frac{4}{2k+1} \sum_{j=1}^{k} t_j(n),
\end{equation}
and, for $1\leq r\leq k$,
\begin{equation}
\label{x4}
a(n, Q_r)=a(n, {\bar Q}_r) =\frac{2}{2k+1} \sum_{d\mid n} \Leg{-D}{d}+\frac{4}{2k+1} \sum_{\substack{j=1\\j\neq r}}^{k}t_j(n)- \frac{4k-2}{2k+1}t_r(n).
\end{equation}
\end{theorem}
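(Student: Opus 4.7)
My plan is to combine three classical ingredients: the fact that $\theta_Q(z) := \sum_{a, b \in \Z} q^{Q(a, b)}$ lies in $M_1(\Gamma_0(D), \Leg{-D}{\cdot})$ for any positive definite integral form $Q$ of discriminant $-D$; the ideal-theoretic averaging identity
$$\sum_{[Q]} a(n, Q) = w \sum_{d \mid n} \Leg{-D}{d}, \qquad n \geq 1,$$
where the sum runs over all form classes, coming from the bijection between form classes and ideal classes in $\mathcal{O}_K$ (with $K = \Q(\sqrt{-D})$); and the Hecke theta series attached to characters of the ideal class group of $K$.

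I would begin by disposing of the easy parts. Since $\bar{Q}_r(x, y) = Q_r(x, -y)$, the map $(x, y) \mapsto (x, -y)$ shows that $a(n, Q_r) = a(n, \bar{Q}_r)$ and $\theta_{\bar{Q}_r} = \theta_{Q_r}$, yielding the first equality in \eqref{x4}. Part (i) is immediate from the averaging identity applied with a single class. For part (ii), $h(-D) = 2k+1 \geq 3$ forces $D \notin \{3, 4\}$, so $w = 2$ and the averaging identity reads
$$a(n, Q_0) + 2\sum_{r=1}^{k} a(n, Q_r) = 2 \sum_{d \mid n} \Leg{-D}{d}.$$
The definition of $F_{D, r}$ yields $a(n, Q_r) = a(n, Q_0) - 2 t_r(n)$ for $n \geq 1$; substituting and solving for $a(n, Q_0)$ produces \eqref{x3}, and \eqref{x4} follows by back-substitution. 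The normalization $t_r(1) = 1$ is immediate: $a(1, Q_0) = 2$ from the representations $(x, y) = (\pm 1, 0)$, while every non-principal reduced form $Q_r$ has $a_r, c_r \geq 2$, forcing $a(1, Q_r) = 0$.

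The substantive step is proving that $F_{D, r}$ is a \emph{cusp} form, not merely a modular form vanishing at $\infty$. For each character $\chi$ of the class group of $K$, I would invoke the Hecke theta series
$$\theta_\chi(z) := \sum_{\mathfrak{a}} \chi(\mathfrak{a}) \, q^{N(\mathfrak{a})} \in M_1\bigl(\Gamma_0(D), \Leg{-D}{\cdot}\bigr),$$
which is an Eisenstein series when $\chi$ is trivial and a cusp form whenever $\chi$ is non-trivial. Using the bijection between form and ideal classes together with orthogonality of characters, one obtains the expansion
$$\theta_Q(z) = 1 + \frac{w}{h(-D)} \sum_{\chi} \overline{\chi}([Q]) \, \theta_\chi(z),$$
whose trivial-character contribution is independent of $[Q]$. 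Consequently
$$F_{D, r}(z) = \frac{w}{2 h(-D)} \sum_{\chi \neq 1} \bigl(1 - \overline{\chi}([Q_r])\bigr) \theta_\chi(z)$$
is exhibited as a $\C$-linear combination of cusp forms, proving the claim. The main obstacle lies precisely here: vanishing at $\infty$ follows from matched constant terms, whereas vanishing at the remaining cusp(s) of $\Gamma_0(D)$ requires the genuine input provided by Hecke's class-group theta series, and this cusp-form decomposition is the cleanest route.
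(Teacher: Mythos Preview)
Your argument is correct and reaches the same conclusions as the paper, but the route to cuspidality differs. The paper invokes the classical fact that theta series of forms lying in the same genus differ by a cusp form (citing Wang--Pei and Walling), and then observes that since $h(-D)$ is odd while the number of genera is always a power of two, there is only one genus, so every $\Theta_{Q_0}-\Theta_{Q_r}$ is automatically cuspidal. Your approach via the Hecke decomposition $\theta_Q = 1 + \frac{w}{h(-D)}\sum_\chi \overline{\chi}([Q])\,\theta_\chi$ is equally valid and arguably more explicit, since it exhibits $F_{D,r}$ directly as a linear combination of the cusp forms $\theta_\chi$. One caveat worth making explicit: the assertion that $\theta_\chi$ is cuspidal for \emph{every} non-trivial $\chi$ holds here precisely because $h(-D)$ is odd, so the class group has no $2$-torsion and hence no non-trivial real characters; in general a non-trivial genus character (one with $\chi^2=1$) yields an Eisenstein series rather than a cusp form. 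Both proofs therefore lean on the parity hypothesis at the same structural point, packaged differently---the paper through genus theory on the form side, yours through the character group on the ideal side.
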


\begin{Remarks}
{\rm
(i) The van der Blij's result is a particular case of Theorem \ref{main-1}. To see this, let
$F_{23, 1}(z)$ be \eqref{fdr} for $(D, r)=(23, 1)$,
%$$F_{23, 1}(z) = \frac{1}{2}\left( \sum_{a,b\in \mathbb{Z}}^{}q^{Q_0(a,b)}-\sum_{a,b\in \mathbb{Z}}^{}q^{Q_1(a,b)} \right) = \sum_{n=1}^{\infty}t_1(n)q^n$$
where $Q_0(x,y)$ and $Q_1(x, y)$ are forms given in \eqref{r-forms}. Then, by \thmref{main-1}, $F_{23, 1}(z)$ is a cusp form of weight $1$, level $23$, and character $\Leg{-23}{\cdot}$ with $t_1(1)=1$. Moreover, \eqref{x1} and \eqref{x2} hold for $t(n)=t_1(n)$, and that by the quadratic reciprocity $\Leg{d}{23}=\Leg{-23}{d}$.
%For $z\in \mathcal{H}$ (the upper half-plane) and $q=e^{2\pi i z}$, 
Let 
\begin{equation}
\label{eta}
\eta(z)=q^{\frac{1}{24}}\prod_{n=1}^{\infty} (1-q^n)
\end{equation}
be the Dedekind eta function. By \cite[Theorem 1.64]{O}, $\eta(z)\eta(23z)$ is a cusp form of weight $1$, level $23$, and character $\Leg{-23}{\cdot}$. Hence, $F_{23, 1}(z)$ and $\eta(z)\eta(23z)$ are in the same space of cusp forms. Since this space is 
one dimensional (see \tabref{A}) and $t_1(1)$=1, we conclude
 \begin{equation}
 \label{t1}
 \sum_{n=0}^{\infty}t_1(n)q^n=\eta(z)\eta({23}z)=q\prod_{n=1}^\infty (1-q^n)(1-q^{23n}).  
 \end{equation}
%The van der Blij's result follows from \eqref{x1}, \eqref{x2}, \eqref{t1} and the fact that by the quadratic reciprocity $\Leg{-23}{d}=\Leg{d}{23}$.

(ii) Theorem \ref{main-1} provides an efficient way of computing the representation numbers $a(n, Q)$. In fact, by considering a basis for the space of cusp forms of weight $1$, level $D$, and character $\Leg{-D}{.}$ and computing $a(n, Q)$ for positive integers $n$ up to the dimension of this space, we can express $F_{D, r}(z)$ as a linear combination of this basis to compute $(t_r(n))$, and thus $a(n, Q)$, via the expressions \eqref{x3} and \eqref{x4}. The tables of Section \ref{appendix} provide the necessary information for performing this procedure for discriminants of class number one, three, and five. The complete list of such discriminants is available to us via effective solution of Gauss's class number problem (see \cite{Wa} for the latest result on the subject). 

%(iii) In Section \ref{binqua}, we show that if $h(-D)$ is odd, then the non-principal reduced forms will appear as conjugate pairs. Also the discriminant $-D$ for such forms is $-4$, $-8$, or negative of a prime. These properties, hence Theorem \ref{main-1}, do not hold for forms whose class numbers are even. 
%
(iii) When the class number is even, several key differences prevent a straightforward description as in Theorem \ref{main-1}. For example, 
%a reduced form and its conjugate may belong to the same class 
the conjugate of a reduced form may not be a reduced form, so the number of the functions $F_{D, r}(z)$ do not have a simple expression in terms of the class number as Thorem \ref{main-1}. For instance, for $-D = -87$, which has class number 6, the reduced form $3x^2 + 3xy + 8y^2$ and its conjugate $3x^2 - 3xy + 8y^2$ lie in the same class, which leads to four functions $F_{D, r}(z)$ to consider. For $-D=-420$, which has class number 8, any reduced form and its conjugate are in the same class, which results in eight functions $F_{D, r}(z)$ to consider. 
%{\color{red} Secondly, $D$ is not prime, which complicates determining whether $F_ {D,r}(z)$ is a cusp form due to the increased number of cusps when $D$ is not prime. For example, $\Gamma_0(87)$ has four inequivalent cusps,
%in contrast, $\Gamma_0(D)$, when $D$ is prime, has only two inequivalent cusps $i\infty$ and $1/1$.}
%instead of two cusps $i\infty$ and $1/1$ for $\Gamma_0(D)$ when $D$ is prime. 
%Similar complications arise when the class number is odd and $-D$ is not a fundamental discriminant. For example, for $-D=-100$ the class number is 3, but any reduced form and its conjugate are in the same class????.
}

\end{Remarks} 
The function $\eta(z)\eta(23z)$ appearing in \eqref{t1} is an example of an eta quotient.
For integers $N (\geq 1)$ and $r_\delta$, we say that $f(z)=\prod_{\delta\mid N} \eta(\delta z)^{r_\delta}$ is an \emph{eta quotient} of level $N$.  Since $\eta(z)$ has a product representation in terms of $q$ (see \eqref{eta}), the $q$-expansion of an eta quotient can be conveniently computed. Hence, an eta-quotient representation for $F_{D, r}(z)$ in Theorem \ref{main-1} will provide an efficient way for computing the sequence $(t_r(n))$.
%If $r_\delta\geq 0$ for all $\delta\mid N$, we call such a function an \emph{eta product}. 
Given this, a natural problem regarding Theorem \ref{main-1} is determining all values of $D$ and $r$ for which $F_{D, r}(z)$ has an eta quotient representation. Our next result settles this problem. 
\begin{proposition}
\label{main-2}
Under the conditions of Theorem \ref{main-1}, the pair $(23, 1)$ is the only pair $(D, r)$ for which $F_{D, r}(z)$ has an eta quotient representation. 
\end{proposition}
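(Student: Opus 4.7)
The plan is to combine genus theory for the imaginary quadratic field $\Q(\sqrt{-D})$ with Ligozat's conditions for eta quotients, reducing the statement to an elementary Diophantine equation. First I would handle $D$: since $F_{D,r}$ exists only for $k \ge 1$, we have $h(-D) \ge 3$. By genus theory, the $2$-rank of the class group of $\Q(\sqrt{-D})$ equals $t-1$, where $t$ is the number of distinct primes dividing the fundamental discriminant $-D$. Odd class number forces $t = 1$; combined with $h(-D) \ge 3$ (which rules out $-D \in \{-3, -4, -7, -8\}$), this restricts $D$ to be an odd prime with $D \equiv 3 \pmod{4}$.

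Next, suppose $F_{D,r}(z) = \prod_{\delta \mid N} \eta(\delta z)^{r_\delta}$ for some level $N$. The nebentypus of an eta quotient of level $N$ is a Dirichlet character modulo $N$, whereas $F_{D,r}$ has primitive character $\Leg{-D}{\cdot}$ of conductor $D$; this forces $D \mid N$. I would then argue that all exponents $r_\delta$ with $\delta \nmid D$ must vanish. Since $D$ is prime, $X_0(D)$ has only the two cusps $0$ and $\infty$, and imposing $\Gamma_0(D)$-invariance of $F_{D,r}$ on Ligozat's order-of-vanishing formulas at each cusp of $X_0(N)$ lying over these two should produce a linear system whose only solution has $r_\delta = 0$ for $\delta \nmid D$. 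This reduces the representation to $f(z) = \eta(z)^a \eta(Dz)^b$.

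With this reduction in hand, the analysis becomes elementary. The weight-$1$ condition gives $a + b = 2$, while the normalization $t_r(1) = 1$ forces the leading $q$-power $(a + Db)/24$ to equal $1$, i.e., $a + Db = 24$. Subtracting yields $b(D - 1) = 22$, so $D - 1 \in \{1, 2, 11, 22\}$ and $D \in \{2, 3, 12, 23\}$. Among these, only $D = 23$ is consistent with the reduction above, giving $(a, b) = (1, 1)$. Since $\dim S_1(\Gamma_0(23), \Leg{-23}{\cdot}) = 1$ and both $F_{23,1}(z)$ and $\eta(z)\eta(23z)$ have leading Fourier coefficient $1$, they coincide, recovering \eqref{t1}.

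The hardest step will be the reduction to divisors of $D$ in the eta-quotient representation --- that is, showing no $\delta \mid N$ with $\delta \nmid D$ can appear with nonzero exponent. I expect this to require either a careful enumeration using Ligozat's cusp-order formulas at every cusp of $X_0(N)$, or invocation of existing classification results for holomorphic weight-$1$ eta quotients (for instance from work of Mersmann, Rouse--Webb, or Martin's classification of multiplicative eta quotients).
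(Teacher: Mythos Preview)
Your route is genuinely different from the paper's. The paper obtains \propref{main-2} as a corollary of \thmref{cn}: by \propref{cuspall}, $F_{D,r}$ vanishes to order exactly $1$ at each of the $D-1$ cusps of $\Gamma_1(D)$, so the valence formula forces $\sum_{z\in\Gamma_1(D)\backslash\mathbb{H}}\nu_z(F_{D,r})/e_z=(D-1)(D-23)/24$, which is positive for every admissible $D\neq 23$. Hence $F_{D,r}$ has a zero in $\mathbb{H}$, and any eta quotient is zero-free there. Your strategy instead classifies the possible eta quotients directly by a Diophantine argument; this is exactly the method of \propref{eta-23} (proved there for use in \thmref{Pair}), sharpened by the extra constraint $(a+Db)/24=1$ coming from $t_r(1)=1$. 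The paper's route yields the stronger conclusion that the exponents $c(n)$ are unbounded; yours is more elementary once the reduction is secured, and the arithmetic $a+b=2$, $a+Db=24$, $(D-1)b=22$ is correct.

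The gap is precisely in that reduction. Your proposed mechanism---matching Ligozat's cusp orders on $X_0(N)$ under the $\Gamma_0(D)$-action---only yields that $\Gamma_0(D)$-equivalent cusps of $X_0(N)$ carry equal vanishing order, and it is not clear that these linear relations suffice to kill all $r_\delta$ with $\delta\nmid D$, particularly since $N$ is not fixed in advance. The nebentypus observation at best gives $D\mid\mathrm{lcm}\{\delta:r_\delta\neq 0\}$, not the reverse divisibility you need. The clean argument is the one in the proof of \propref{eta-23}: since the eta quotient equals $F_{D,r}$ it is modular for $\Gamma_0(D)$, and by Bhattacharya \cite[p.~880]{B} the minimal $\Gamma_0$-level of $\prod_\delta\eta(\delta z)^{r_\delta}$ is exactly $\mathrm{lcm}\{\delta:r_\delta\neq 0\}$; primality of $D$ then forces this lcm to be $1$ or $D$, whence $f=\eta(z)^a\eta(Dz)^b$. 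None of Mersmann, Rouse--Webb, or Martin's classification supplies this step in the form you need.
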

Thus, among the results provided in Theorem \ref{main-1}, van der Blij's assertion is the only one with an eta quotient representation. Since an eta quotient is non-zero on the upper half-plane, the proof of Proposition \ref{main-2} can be done by studying the zeros of $F_{D, r}(z)$ via valence formula (see \lemref{valencef} and \propref{cuspall}). However,
we prove Proposition \ref{main-2} as a corollary of a more general theorem. 
A result of Eholzer and Skoruppa \cite[Section 2]{ES} (See also \lemref{kohnen}) guarantees that $F_{D, r}(z)$ has a product representation 
%as an infinite product given by
\begin{equation}
\label{expansion}
F_{D, r}(z)=q\prod_{n=1}^\infty (1-q^n)^{c(n)}
%, 
%\hspace{2mm}q=e^{2\pi iz},
\end{equation}
for some integers $c(n)$ and sufficiently small $|q|$. The exponents $c(n)$ above can be conveniently computed using \eqref{fdr}. More precisely, let $(\alpha(n))$ be the sequence given by the recurrence
$$\alpha(n)=-nt_r(n+1)-\sum_{k=1}^{n-1} t_r(n-k+1) \alpha(k)$$
with the initial condition $\alpha(1)=-t_r(2)$, where $(t_r(n))$ is the sequence given in \eqref{fdr}. Then,
$$c(n)=\frac{1}{n} \sum_{d\mid n} \alpha(d) \mu(\frac{n}{d}),$$
where $\mu(\cdot)$ is the M\"{o}bius function.
If $F_{D, r}(z)$ is an eta quotient, then the exponents $c(n)$'s are bounded. For example, in \eqref{expansion} for $F_{23, 1}(z)$, we have 
$$c(n)= \begin{cases}
2& {\rm if}~ n\equiv 0~( {\rm mod}~23),\\
1& {\rm Otherwise}.
\end{cases}$$

We prove the following.
%in Section \ref{Serre} the following result, which, for $D\neq23$, implies that $F_{D, r}(z)$ does not have an eta quotient representation.
\begin{theorem}
\label{cn}
Assume that the fundamental discriminant $-D\neq -23$ is such that $h(-D)$ is an odd integer greater than $1$. 
%Let $Q_1, Q_2, Q_3$ be the three reduced forms of discriminant $-D$, where $Q_1$ is the principal form. Let $$F_{D, r}(z)=\frac{1}{2}\left( \sum_{a,b\in \mathbb{Z}}^{}q^{Q_1(a,b)}-\sum_{a,b\in \mathbb{Z}}^{}q^{Q_2(a,b)} \right).$$ 
Then the integers $c(n)$ in the expansion \eqref{expansion}
%$$
%F_{D, r}(z)=q\prod_{n=1}^\infty (1-q^n)^{c(n)}
%$$
are unbounded. More generally, for any $\alpha\in \mathbb{R}$, $$\frac{1}{n^{\alpha}}{\displaystyle{\sum_{d\mid n}} d c(d)}$$
is not bounded.
\end{theorem}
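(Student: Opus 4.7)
Suppose for contradiction that $b(n) := \sum_{d\mid n} d\,c(d)$ satisfies $|b(n)|\le C n^\alpha$ for some $\alpha\in\R$. The plan is to read this growth hypothesis off the analytic behaviour of a suitable weight-$2$ meromorphic modular form built from $F := F_{D,r}$. Differentiating the product expansion \eqref{expansion} term by term yields
\begin{equation*}
q\tfrac{d}{dq}\log F(z) \;=\; 1 - \sum_{n\geq 1} b(n)\,q^n.
\end{equation*}
Although this log-derivative is not itself modular, combining its transformation behaviour under $\Gamma_0(D)$ with the quasi-modularity of $E_2(z) = 1 - 24\sum_{n\geq 1}\sigma(n)q^n$ (the constant $\tfrac{1}{12}$ below is tuned so that the two anomalies cancel exactly), one checks that
\begin{equation*}
\phi(z) \;:=\; q\tfrac{d}{dq}\log F(z) - \tfrac{1}{12}E_2(z) \;=\; \tfrac{11}{12} + \sum_{n\geq 1}\bigl(2\sigma(n) - b(n)\bigr) q^n
\end{equation*}
is a meromorphic modular form of weight $2$ on $\Gamma_0(D)$ with trivial character, whose only singularities in $\HH$ are simple poles located precisely at the zeros of $F$.

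Because $\sigma(n)\ll n^{1+\varepsilon}$, the contradiction hypothesis makes the Fourier coefficients of $\phi$ at $\infty$ grow at most polynomially in $n$. Hence the $q$-series for $\phi$ has radius of convergence $1$ and defines a holomorphic function on all of $\HH$. Consequently $\phi$ has no poles in $\HH$, so $F = F_{D,r}$ has no zeros in the upper half-plane.

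To reach the contradiction I would then invoke the Fricke involution $W_D = \begin{psmallmatrix} 0 & -1 \\ D & 0\end{psmallmatrix}$. Poisson summation on the lattice of $Q$ yields $\theta_Q|_1 W_D = -i\,\theta_{Q^{-1}}$ for every positive-definite $Q$ of discriminant $-D$, where $Q^{-1}$ denotes the Gauss inverse and the scalar depends only on $D$. Since $Q_0^{-1}=Q_0$, $Q_r^{-1}=\bar Q_r$, and $\theta_{\bar Q_r}=\theta_{Q_r}$ (the substitution $(a,b)\mapsto(a,-b)$ in \eqref{fdr} bijects the representations), we obtain $F_{D,r}|_1 W_D = -i\,F_{D,r}$. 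As $W_D$ interchanges the cusps $0$ and $\infty$, and $\nu_\infty(F_{D,r}) = 1$ follows from $t_r(1) = 1$, we deduce $\nu_0(F_{D,r}) = 1$. The hypotheses of the theorem moreover force $D$ to be an odd prime with $D\equiv 3\pmod 4$ (these are the only fundamental discriminants of odd class number exceeding $1$), so $\Gamma_0(D)$ has no elliptic points of order $2$. The weight-$1$ valence formula then reads
\begin{equation*}
\nu_\infty(F) + \nu_0(F) + \tfrac{1}{3}\!\!\sum_{z\in\mathrm{ell}_3}\!\!\nu_z(F) + \!\!\!\sum_{z\in\HH\setminus\mathrm{ell}_3}\!\!\!\nu_z(F) \;=\; \frac{D+1}{12}.
\end{equation*}
The no-zero-in-$\HH$ conclusion kills every term on the left except $\nu_\infty + \nu_0 = 2$, forcing $(D+1)/12 = 2$ and hence $D = 23$, contradicting $D\ne 23$; if instead $D\equiv 7\pmod{12}$ the contradiction is even more immediate, since $(D+1)/12$ is then non-integral. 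The unboundedness of the $c(n)$ themselves now follows formally: if $|c(n)|\le M$ for all $n$, then $|b(n)|\le M\sigma(n)\ll n^{1+\varepsilon}$, contradicting the already-established unboundedness of $b(n)/n^2$.

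The main obstacle will be verifying the Poisson-summation identity $\theta_Q|_1 W_D = -i\,\theta_{Q^{-1}}$ with the scalar depending only on the discriminant and not on the individual class $Q$. Once this fact is in hand, the modularity of $\phi$, the radius-of-convergence step, and the weight-$1$ valence-formula bookkeeping are all essentially standard, though one must handle the two residue classes $D\equiv 7,11\pmod{12}$ separately.
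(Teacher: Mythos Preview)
Your argument is correct and follows the same two-step strategy as the paper: (i) use a valence formula to show that $F_{D,r}$ must have a zero in $\mathbb{H}$ whenever $D\neq 23$, and (ii) deduce from such a zero that the radius of convergence of the logarithmic $q$-series is strictly less than $1$, forcing $b(n)=\sum_{d\mid n}d\,c(d)$ to grow super-polynomially. The implementations differ in two places. First, the paper works on $\Gamma_1(D)$ (where the character becomes trivial) and computes the order at each of the $D-1$ cusps via the explicit theta transformation at $1/1$; you instead stay on $\Gamma_0(D)$ and use the Fricke involution $\theta_Q|_1W_D=-i\,\theta_{Q^{-1}}$ to transport the order from $\infty$ to $0$, which is slicker but requires either squaring $F$ or a word about why the valence formula applies to forms with nontrivial nebentypus. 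Second, your detour through the weight-$2$ form $\phi=q\tfrac{d}{dq}\log F-\tfrac{1}{12}E_2$ is harmless but unnecessary: you never use the modularity of $\phi$, and the paper's Proposition~\ref{unboundedlemma} reaches the same conclusion by looking directly at the radius of convergence of $\log F$, whose $n$-th coefficient is $-b(n)/n$. Your treatment of the two residue classes $D\equiv 7,11\pmod{12}$ is fine; in the paper the elliptic-point bookkeeping disappears because on $\Gamma_1(D)$ there are no elliptic points for $D>3$.
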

Note that Theorem \ref{cn} implies that $F_{D, r}(z)$ does not have an eta quotient representation. 
The proof of it relies on an analytic statement (Proposition \ref{unboundedlemma}),
which shows that if $F_{D, r}(z)$ has a zero on the upper half-plane then the integers $c(n)$ are unbounded. The analytic assertion is inspired by the exercise in Section 2.1.3 of Serre's monograph \cite{JPS}.  Hence, the proof of Theorem  \ref{cn} reduces to finding zeros of  $F_{D, r}(z)$ on the upper half-plane. 
%We achieve this by an appeal to valence formula.

The eta quotient representation of some other functions associated with the binary quadratic forms has already been studied by Schoeneberg \cite{Sch}. 
The function $\sum_{a,b\in \mathbb{Z}}^{}q^{Q(a,b)}$, called the \emph{theta function} associated to the binary quadratic form $Q(x, y)$, 
is of level $D$ if $Q(x, y)$ has discriminant $-D$.
For $Q_s(x, y)$ and $Q_r(x, y)$, two distinct reduced form of fundamental discriminant $-D$, let
\begin{equation*}
\label{fdsr}
F_{D,s,r}(z):= \frac{1}{2} \left(\sum_{a, b\in \mathbb{Z}} q^{Q_s(a, b)}-\sum_{a, b\in \mathbb{Z}} q^{Q_r(a, b)} \right)
\end{equation*}
be half the difference of two theta functions of level $D$ associated to $Q_s(x, y)$ and $Q_r(x, y)$.
Here, $s$ and $r$ are non-negative integers and $Q_{0}(x, y)$, as before, is the principal form of discriminant $-D$. Hence, $F_{D, 0, r}(z)=F_{D, r}(z)$ as defined in Theorem \ref{main-1}.
We noted that $F_{D, 0, r}(z)$ for $(D, r)\neq (23, 1)$ do not have eta-quotient representations; however, as proved in \cite{Sch}, eta quotient representations might be possible for $F_{D, s, r}(z)$ for some $s\neq 0$ and $r\neq 0$. For example for $D=47$, let $Q_{s_0}(x, y)= 2x^2+xy+6y^2$ and $Q_{r_0}(x, y)= 3x^2+xy+4y^2$. Then, it can be shown that
$$F_{47, s_0, r_0}(z)= \eta(z)\eta(47z).$$
%(See \cite[Satz 1]{Sch} for a more general result.)
%More generally, let $F_{D, 0}(z)=0$. Then, 
%Our last result classifies all Discriminants in Theorem \ref{main-1} for which there are  two forms $Q_{s}(x, y)$ and $Q_{r}(x, y)$ such that
%$F_{D, s, r}(z)$ is an eta-quotient.

In \cite[Satz 1, p. 179]{Sch}, Schoeneberg proves that for integers $D\equiv 23$ (mod 24), without any restriction on $h(-D)$, there are two forms $Q_{s_1}(x, y)$ and $Q_{r_1}(x, y)$ given by 
\begin{equation}
\label{pair1}
Q_{s_1}(x,y)=6x^2+xy+\frac{D+1}{24}y^2
\end{equation}
and 
\begin{equation}
\label{pair2}
Q_{r_1}(x,y)=6x^2+5xy+\frac{D+25}{24}y^2
\end{equation}
for which 
\begin{equation*}  
\label{diff}
F_{D, s_1, r_1}(z)= \eta(z)\eta(Dz).
\end{equation*}
This pair consists of reduced forms when $(D+1)/24\geq6$. When $(D+1)/24<6$, which is for only finitely many values of $D$, we will replace this pair with the corresponding pair of reduced forms. Let us call this pair the \emph{Schoeneberg pair} and denote it by $(Q_{s_0}(x, y), Q_{r_0}(x, y))$. Hence, for $(D+1)/24\geq 6$, the Schoeneberg pair $(Q_{s_0}(x, y), Q_{r_0}(x, y))$ is the pair given by \eqref{pair1} and \eqref{pair2}. The Schoeneberg pairs for $(D+1)/24<6$ are listed in Table \ref{tablea}.

 \begin{table}[ht]
 \label{t1-intro}
\centering
\begin{tabular}[t]{|c c c|}  \hline
 $-D$ & $Q_{s_0}(x, y)$ & $Q_{r_0}(x, y)$  \\ 
 \hline
 $-23$ & $x^2 + xy + 6y^2$ & $2x^2 + xy + 3y^2$  \\
 \hline
 $-47$ & $2x^2 + xy + 6y^2$ & $3x^2 + xy + 4y^2$  \\
 \hline
 $-71$ & $3x^2 + xy + 6y^2$ & $4x^2 + 3xy + 5y^2$  \\
 \hline
 $-95$ & $4x^2 + xy + 6y^2$ & $5x^2 + 5xy + 6y^2$  \\
  \hline
 $-119$ & $5x^2 + xy + 6y^2$ & $6x^2 + 5xy + 6y^2$  \\
 \hline
 \end{tabular} \caption{Schoeneberg pairs when $(D+1)/24<6$}\label{tablea}
\end{table}

The Schoeneberg pairs appear in a classification problem for eta quotients. There is a large literature on classifying the eta quotients (see \cite[Pages 197-199]{HC} for a sample of such results). For example, 
%There are a lot of interesting questions one can ask about eta products. 
an exhaustive list of primitive eta quotients that are also holomorphic modular forms of weight $1/2$ is given by Mersmann (see \cite[Theoem 5.9.4]{HC}). Further, 
%\cite[Theorem 1.1]{LO} lists out all 13 eta quotients that are theta functions attached to Dirichlet characters. 
%(see \cite[Pages 197-199]{Cohen}). Cohen also lists the eta product $\eta{z}\eta(23z)$ that arises as a difference of theta functions. 
%Another interesting characterization appears in 
\cite[Lemma 2.2] {H} lists out all seven CM eta quotients of weight 1 that are half the difference of two theta functions of binary quadratic forms of negative discriminants, not necessarily of fundamental discriminants. 
%From now on by a theta function of level $D$ we mean a theta function attached to a binary quadratic form of discriminant $-D$.
%This section gives an exhaustive list of eta products arising from differences in theta functions of binary quadratic forms of fundamental discriminant of odd class numbers.
%There is a large literature on classifying the eta quotients (see \cite[] for a sample of such results). 
Our last theorem gives a complete classifiction of 
the eta quotients of prime level $D$ which are half the difference of two theta functions of level $D$. It turns out that Schoeneberg pairs classify such eta quotients. 
 
\begin{theorem}
\label{Pair}
An eta quotient $E(z)$ of prime level $D$ is half the difference of two theta functions of level $D$ if and only if prime $D\equiv 23\pmod{24}$ and $E(z)=\eta(z) \eta(Dz) $. Moreover, for prime $D\equiv 23\pmod{24}$, we have
\begin{equation}
\label{rep}
\eta(z) \eta(Dz) 
= \frac{1}{2} \left(\sum_{a, b\in \mathbb{Z}} q^{Q_{s_0}(a, b)}-\sum_{a, b\in \mathbb{Z}} q^{Q_{r_0}(a, b)} \right),
\end{equation}
where $(Q_{s_0}(x, y), Q_{r_0}(x, y))$ is the Schoeneberg pair of Discriminant $-D$, and the representation \eqref{rep} is unique up to proper equivalence and conjugation of forms  $Q_{s_0}(x, y)$  and  $Q_{r_0}(x, y)$
%Let the integer $-D<0$ be a fundamental discriminant of an odd class number greater than 1. 
%Then, up to conjugation and order of forms, the Schoeneberg pair $(Q_{s_0}(x, y), Q_{r_0}(x, y))$ is the only pair of distinct reduced forms of discriminant $-D$ for which  $F_{D, s_0, r_0}(z)$ is an eta quotient.
\end{theorem}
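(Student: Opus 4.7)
Since any eta quotient of prime level $D$ has the shape $E(z)=\eta(z)^{r_1}\eta(Dz)^{r_D}$ for integers $r_1,r_D$, and since any half-difference $F_{D,s,r}(z)$ of two theta functions of level $D$ lies in the cusp space $S_1(\Gamma_0(D),\Leg{-D}{\cdot})$, the plan is to pin down $r_1$, $r_D$, and $D$ from the modular-form data, then apply Schoeneberg's result for the existence half, and finally compare $q$-coefficients for uniqueness.

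I would first extract the constraints on $(r_1,r_D)$. Weight $1$ forces $r_1+r_D=2$. The Newman--Ligozat character formula for $\prod_{\delta\mid D}\eta(\delta z)^{r_\delta}$ gives Nebentypus $\chi(d)=\Leg{(-1)^{k}D^{r_D}}{d}$ with $k=1$; comparing with $\Leg{-D}{\cdot}$ forces $r_D$ (and hence $r_1$) to be odd. Ligozat's cusp conditions next impose $r_1+Dr_D\equiv 0\pmod{24}$ and $Dr_1+r_D\equiv 0\pmod{24}$ for integrality at the two cusps $\infty$ and $0$ of $X_0(D)$, while the cusp-form property strengthens each to $\geq 24$. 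Substituting $r_1=2-r_D$ yields
\[
22\leq (D-1)r_D\leq 2D-24,\qquad (D-1)r_D\equiv 22\pmod{24}.
\]
The range already forces $D\geq 23$; writing the upper bound as $(r_D-2)(D-1)\leq -22$ then rules out odd $r_D\geq 3$, while $(D-1)r_D\geq 22$ rules out $r_D\leq -1$. Hence $r_D=1$, $r_1=1$, and $D\equiv 23\pmod{24}$, giving $E(z)=\eta(z)\eta(Dz)$. For the converse, Schoeneberg's Satz~$1$ from \cite{Sch} (recalled in the discussion preceding the theorem) gives \eqref{rep} directly with the Schoeneberg pair $(Q_{s_0},Q_{r_0})$ of \eqref{pair1}--\eqref{pair2} (or Table~\ref{tablea} when $(D+1)/24<6$).

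For uniqueness, suppose $\eta(z)\eta(Dz)=\tfrac12(\theta_{Q_s}-\theta_{Q_r})$ for another pair of reduced forms $(Q_s,Q_r)$; since $\theta_{Q}=\theta_{\bar Q}$, this identity is meaningful only up to conjugation. Matching the expansion $\eta(z)\eta(Dz)=q^{(D+1)/24}\prod_{n\geq 1}(1-q^n)(1-q^{Dn})$ against the explicit $q$-expansion of $\tfrac12(\theta_{Q_s}-\theta_{Q_r})$ determines $r_{Q_s}(n)-r_{Q_r}(n)$ for every $n$; at the leading index $n_0=(D+1)/24$ this difference equals $+2$, and non-negativity of representation numbers dictates that exactly one of the forms represents $n_0$ (with multiplicity $2$) while the other does not. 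The plan is then to iterate this matching at successive $n$ to recover the classes $[Q_s]$ and $[Q_r]$ up to conjugation, identifying them with those of the Schoeneberg pair. The hardest step will be formalizing this last recovery: ruling out spurious pairs whose initial representation numbers happen to replicate the Schoeneberg pair, which I would handle either by invoking the linear independence of theta functions in $M_1(\Gamma_0(D),\Leg{-D}{\cdot})$ modulo the single Eisenstein/genus relation, or by appealing directly to the explicit construction of the pair in \cite{Sch}.
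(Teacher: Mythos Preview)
Your treatment of the necessity direction---pinning down $E(z)=\eta(z)\eta(Dz)$ and $D\equiv 23\pmod{24}$ from the weight, character, and cusp conditions---is essentially the paper's Proposition~\ref{eta-23}, with the cosmetic difference that you extract the parity of $r_D$ from the Nebentypus while the paper deduces $i=6\ell+1$, $j=-6\ell+1$ from the two cusp congruences and then forces $\ell=0$. The existence direction via Schoeneberg's Satz~1 is likewise identical.

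The uniqueness argument is where your proposal remains a sketch rather than a proof. The assertion that ``exactly one of the forms represents $n_0$ with multiplicity $2$ while the other does not'' is not justified: $a(n_0,Q_s)-a(n_0,Q_r)=2$ by itself does not exclude $4-2$, and ``iterate this matching at successive $n$'' is not an argument. What actually makes the leading-coefficient idea work is Lemma~\ref{conjugate}(iii): for a reduced non-principal form $(a,b,c)$ the smallest and second-smallest nonzero represented values are precisely $a$ and $c$. The paper exploits this by a case split. If $a<a'$ then the leading exponent forces $a=(D+1)/24$, and the reduced-form inequalities $|b|\le a\le c$ combined with $b^2=4ac-D$ bound $D\le 189$; a finite SageMath check over those primes then confirms only the Schoeneberg pairs occur. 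If $a=a'$ then the leading exponent forces $c=(D+1)/24$, and the bound $b^2\le a^2\le D/3$ restricts $a\le 8$; a short case-by-case elimination leaves only $a=6$, $b=1$, $b'=5$, which is exactly the Schoeneberg pair. This concrete reduced-form analysis is the missing substance in your outline.

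Your proposed alternative---linear independence of $\theta_{Q_0},\dots,\theta_{Q_k}$ in $M_1(\Gamma_0(D),\Leg{-D}{\cdot})$---would in fact yield a cleaner, computation-free uniqueness proof, since $\theta_{Q_s}-\theta_{Q_r}=\theta_{Q_{s_0}}-\theta_{Q_{r_0}}$ then forces the pairs to coincide up to conjugation. But this independence is itself a nontrivial input (it follows, for instance, from the fact that the Hecke theta series attached to distinct pairs $\{\psi,\bar\psi\}$ of class-group characters are distinct newforms), and you would need to establish or cite it precisely rather than leave it as one of two unexecuted options.
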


Note that, by Theorem \ref{main-1}, for discriminants $-D$ in the above theorem $F_{D, s, r}(z)=F_{D, r}(z)-F_{D, s}(z)$ is a cusp form of weight $1$, level $D$, and character $\Leg{-D}{.}$. The proof of Theorem \ref{Pair} crucially uses a classification of eta quotients in these cusp spaces (Proposition \ref{eta-23}).

The structure of the paper is as follows. In Section \ref{binqua}, we review the basics of the theory of binary quadratic forms. We give the proof of Theorem \ref{main-1} in Section \ref{Section 3}. The proof of the analytic result (Proposition \ref{unboundedlemma}) on unboundedness of the exponent coefficients in the product expansion of a non-vanishing analytic function on the upper half-plane is given in Section \ref{Serre}. Theorem \ref{cn} is proved in Section \ref{Section 5}. A classification of the eta quotients in the space of cusp forms (Proposition \ref{eta-23}) and a proof of Theorem \ref{Pair} are given in Section \ref{Section 6}. Finally, Section \ref{appendix} details the relevant specific data for Theorem \ref{main-1} when the class number is one, three, or five. 
\medskip\par
\noindent{\bf Notations:} Throughout the paper $Q(x, y)$ is a positive definite binary quadratic form of discriminant $-D$, and $Q_0(x, y)$ is the principal form of discriminant $-D$. The class number $h(-D)$ is the
cardinality of the form class group of discriminant $-D$.
%(equivalently the class number of the imaginary quadratic field $\mathbb{Q}(\sqrt{-D})$). 
The function $a(n, Q)$ denotes the number of representations of $n$ by $Q(x, y)$. 
The upper half-plane and extended upper half-plane are denoted by $\mathbb{H}$ and ${\mathbb{H}}^{*}$. 
We set $q= e^{2\pi i z}$ for $z\in \mathbb{H}$. 
The theta function of $Q(x, y)$ is denoted by $\Theta_Q(z)$.
The order of vanishing  of a modular form $f$ at a point $z\in{\mathbb{H}}^{*}$ is denoted by $\nu_z(f)$. 
We denote the multiplicative group of two by two integer matrices with determinant one by ${\rm SL}_2(\mathbb{Z})$. The subgroup of ${\rm SL}_2(\mathbb{Z})$ consisting of matrices $\begin{psmallmatrix}a & b\\c & d\end{psmallmatrix}$ where $c\equiv 0$ (mod $N$) is denoted by $\Gamma_0(N)$. The subgroup of ${\rm SL}_2(\mathbb{Z})$ consisting of matrices $\begin{psmallmatrix}a & b\\c & d\end{psmallmatrix}$ where $c\equiv 0$ (mod $N$) and $a,d\equiv 1$ (mod $N$) is denoted by $\Gamma_1(N)$. 
We denote the Kronecker symbol attached to $-D$ by $\Leg{-D}{.}$, the space of modular forms of weight $1$, level $N$, and character $\Leg{-D}{.}$ by 
$M_1(\Gamma_0(N),\left(\frac{-D}{.}\right))$, and the corresponding cusp space by $S_1(\Gamma_0(N),\left(\frac{-D}{.}\right))$. The space of cusp forms for $\Gamma_1(N)$ is denoted by $S_1(\Gamma_1(N))$. The function $\eta(z)$ is the Dedekind eta function.

\section{Binary quadratic forms}\label{binqua}
We review some basic facts and terminology of binary quadratic forms. For a detailed description see \cite[Chapter 1]{DC}.   
%An \emph{integral binary quadratic form} is a polynomial 
%$$Q(x,y)=ax^2+bxy+cy^2,$$ 
%where $a,b$ and $c$ are integers. The form is said to be \emph{primitive} if $a,b$ and $c$ are relatively prime. The discriminant of $Q(x, y)$ is $d=b^2-4ac$. 
%It can be shown that when the discriminant is negative, $Q$ represents either positive or negative integers depending on the sign of the leading coefficient. So when $D<0$ and $a>0$, $Q$ only represents positive integers and 
%The form is said to be  \emph{positive definite} if $d<0$ and $a>0$.
%The following definition regarding the equivalence of forms is due to Lagrange.
%\begin{definition}
%Two positive definite forms $Q_1(x,y)$ and $Q_2(x,y)$ are said to be \emph{properly equivalent} if there exists integers $p,q,r$ and $s$ such that $ps-qr= 1$ and 
%$$Q_1(x,y)=Q_2(px+qy,rx+sy).$$
%Moreover, this equivalence is called proper if $ps-qr=1$.
%\end{definition} 
%In the context of positive definite forms, we only talk about the proper equivalence and going forward, we drop the word proper.  
%We want to know if there are finitely many equivalent classes of positive definite quadratic forms and whether there is a simple description for the representative of each class. To answer these questions, we first define what a reduced form is.
%\begin{definition}
%\subsection{Reduced forms}
Recall that a primitive positive definite form $Q(x,y)=ax^2+bxy+cy^2$ is said to be a \emph{reduced form} if 
\begin{equation}
\label{rf-conditions}
\quad|b| \leq a \leq c, \text { and } b \geq 0 \text { if either }|b|=a \text { or } a=c.
\end{equation}
%\end{definition}
Two positive definite forms $Q_1(x,y)$ and $Q_2(x,y)$ are said to be \emph{properly equivalent} if there exists integers $p,q,r$ and $s$ such that $ps-qr= 1$ and 
$Q_1(x,y)=Q_2(px+qy,rx+sy).$
Every primitive positive definite form is properly equivalent to a unique reduced form. 
Given a negative discriminant $d$, the \emph{principal form} is defined to be
$$\begin{cases}
x^2-\frac{d}{4} y^2,&{\rm if}~ d \equiv 0~({\rm mod}~4),\\
x^2+xy+\frac{1-d}{4} y^2,&{\rm if}~d \equiv 1~({\rm mod}~4).
\end{cases}$$
The following property of reduced forms, used later in the proofs, is a direct consequence of the definition of a reduced form and that the discriminant is $0$ or $1$ modulo $4$. 
\begin{lemma}\label{leastpositive}
%(i) Let $Q(x, y)=ax^2+bxy+cy^2$ be a reduced form. Then, the least positive integer represented by the reduced form is equal to $a.$
Let $Q(x, y)=ax^2+bxy+cy^2$ be a reduced form that is not principal. Then, $a>1$.
%For any $-D\equiv0,1\pmod{4}$, there is a unique reduced form of discriminant $-D<0$ with the leading coefficient 1.
\end{lemma}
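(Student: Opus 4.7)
The plan is to argue by contradiction: suppose $a=1$ and show that $Q(x,y)$ must then coincide with the principal form of its discriminant, contradicting the hypothesis.

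If $a=1$, then the reduced-form inequalities \eqref{rf-conditions} force $b\in\{-1,0,1\}$, and the tie-breaking clause---$b\geq 0$ whenever $|b|=a$---further narrows this to $b\in\{0,1\}$. This leaves exactly two cases, which correspond to the two branches in the definition of the principal form given in the preceding paragraph.

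In the case $b=0$, the discriminant equals $-4c$, so one is in the $d\equiv 0\pmod{4}$ branch of the definition with $c=D/4$, giving $Q(x,y)=x^2+(D/4)y^2$, which is by definition the principal form. In the case $b=1$, the discriminant equals $1-4c$, so one is in the $d\equiv 1\pmod{4}$ branch with $c=(1+D)/4$, giving $Q(x,y)=x^2+xy+((1+D)/4)y^2$, again the principal form. Either way we contradict the hypothesis that $Q$ is not principal, so $a\neq 1$; since $a\geq 1$ holds automatically for a positive definite integral form, this forces $a>1$.

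There is no genuine obstacle here---the argument is pure definition-chasing. The only subtlety worth flagging is that the tie-breaking clause of \eqref{rf-conditions} does real work: without invoking it to eliminate $b=-1$, one would additionally need to observe that $x^2-xy+cy^2$ is properly equivalent to $x^2+xy+cy^2$ via $(x,y)\mapsto(x,-y)$ (hence not itself reduced) before concluding.
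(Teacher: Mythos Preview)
Your proof is correct and fills in precisely the details the paper alludes to when it says the lemma is ``a direct consequence of the definition of a reduced form and that the discriminant is $0$ or $1$ modulo $4$.'' The two cases $b=0$ and $b=1$ correspond exactly to the two branches $d\equiv 0,1\pmod 4$ in the definition of the principal form, and the tie-breaking clause rules out $b=-1$; this is the intended argument.

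One small slip in your closing parenthetical: the map $(x,y)\mapsto(x,-y)$ has determinant $-1$, so it is not a proper equivalence. A correct witness that $x^2-xy+cy^2$ and $x^2+xy+cy^2$ are properly equivalent is $(x,y)\mapsto(x-y,y)$ (or equivalently $(x,y)\mapsto(x+y,y)$ in the other direction). This does not affect your main argument, which already disposes of $b=-1$ via the tie-breaking clause.
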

%The principal form has discriminant $d$ and is reduced. 
The collection of equivalence classes of primitive positive definite forms of discriminant $d$ together with Dirichlet composition operation is a finite Abelian group.
The cardinality of this group is named the \emph{class number} $h(d)$. The principal form is the identity of this group.  The inverse of the class containing the form $ax^2+bxy+cy^2$ is the class containing the \emph{conjugate} (or \emph{opposite}) form $ax^2-bxy+cy^2$.
%From here onwards we only deal with primitive positive definite forms.
%We will use the following property of reduced forms later in a proof. 
%
%\begin{lemma}\label{leastpositive}
%%(i) Let $Q(x, y)=ax^2+bxy+cy^2$ be a reduced form. Then, the least positive integer represented by the reduced form is equal to $a.$
%
%\noindent 
%Let $Q(x, y)=ax^2+bxy+cy^2$ be a reduced form that is not principal. Then, $a>1$.
%
%
%%For any $-D\equiv0,1\pmod{4}$, there is a unique reduced form of discriminant $-D<0$ with the leading coefficient 1.
%\end{lemma}
%\begin{proof}
%%(i) Observe that
%%$$Q(x,y)\geq \left( a-|b|+c\right)\min(x^2,y^2).
%%$$
%%Since $Q$ is a reduced form, $c-|b|>0$, hence  
%%$$Q(x,y)\geq a\min(x^2,y^2).$$
%%Thus, $Q(x,y)\geq a$ when $(x,y)\neq (0,0)$. The value $a$ is achieved by $Q$ at the point $(1,0)$; hence, we are done.
%Let $Q(x, y)=x^2+bxy+cy^2$ be a reduced form of discriminant $d$ with leading coefficient one. Then \eqref{rf-conditions} implies $|b|\leq 1$.
%%$$\quad|b| \leq 1 \leq c, \text { and } b \geq 0 \text { if either }|b|=1 \text { or } a=c.$$ 
%Hence, 
%$b=0$ or $b=1$. Since %the discriminant is $d$, the coefficients $b,c$ must also satisfy
%$b^2-4c=d,$ then
%%we have $b^2\equiv d\pmod{4}$ and 
%$d\equiv 0, 1\pmod{4}$. If $d\equiv0\pmod{4}$, then $b=0$ and $c=-d/4$.
%If $d\equiv1\pmod{4}$, then $b=1$ and $c=(1-d)/4$. Hence, if $a=1$, then $Q(x, y)$ is principal. \end{proof} 
%
Now let $d=-D$ be a fundamental discriminant. 
The following lemma characterizes the fundamental discriminants of odd class numbers.
\begin{lemma}
\label{odd}
Suppose $h(-D)$ is odd for a fundamental discriminant $-D$. 
%For positive integer $n$, let $a(n, Q)$ denote the number of representations of $n$ by the positive definite quadratic form $Q(x, y)$. The following hold:  
%(i) If $a>1$ and $Q(x, y)=ax^2+bxy+cy^2 \in \mathcal{Q}_D$, then the conjugate form $\bar{Q}(x, y)=ax^2-bxy+cy^2$ is a distinct member of $\mathcal{Q}_D$.
%(ii) In part (i) we have $a(n, Q)=a(n, \bar{Q})$.(iii) 
Then, $D$ is a prime congruent to $3$ modulo $4$, or $D=4, 8$.
\end{lemma}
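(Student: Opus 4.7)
The plan is to deduce the result from Gauss's genus theory, which tells us that the $2$-rank of the form class group of discriminant $-D$ equals $t-1$, where $t$ denotes the number of distinct prime divisors of the fundamental discriminant $-D$ (see \cite{DC}). Consequently, $h(-D)$ is odd if and only if the $2$-rank vanishes, that is, if and only if $D$ has exactly one prime divisor.

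First I would recall the standard characterization: a negative integer $-D$ is a fundamental discriminant precisely when either $-D\equiv 1\pmod{4}$ with $-D$ squarefree, or $-D=4n$ with $n$ squarefree and $n\equiv 2,3\pmod{4}$. Translated to statements about $D>0$, this says that either $D$ is squarefree with $D\equiv 3\pmod{4}$, or $D=4m$ with $m$ squarefree positive and $m\equiv 1,2\pmod{4}$.

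Next I would carry out the two-case analysis under the additional constraint that $D$ have a single prime divisor. In the first case, $D$ is squarefree, so having exactly one prime divisor forces $D$ itself to be a prime congruent to $3$ modulo $4$. In the second case, $4\mid D$ forces $2$ to be the unique prime divisor, so $m$ must be a power of $2$; squarefreeness then leaves only $m\in\{1,2\}$, which yields $D=4$ or $D=8$. Combining the two cases gives precisely the list claimed in the lemma.

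The only non-routine ingredient is the genus-theoretic input that $h(-D)$ is odd exactly when the discriminant has a unique prime divisor; everything else is a short bookkeeping exercise using the definition of a fundamental discriminant. Accordingly, I expect no serious obstacle beyond correctly invoking genus theory and carefully handling the mod-$4$ congruences in the two cases.
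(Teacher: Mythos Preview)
Your proposal is correct and follows essentially the same approach as the paper: both invoke Gauss's genus theory to conclude that $h(-D)$ is odd exactly when the fundamental discriminant $-D$ has a single prime divisor, and the remaining case analysis on the shape of a fundamental discriminant is routine. The paper simply cites the genus-theoretic fact (referencing \cite[p.~298]{ARW}) without spelling out the bookkeeping you provide, so your write-up is a more detailed version of the same argument.
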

\begin{proof}
This is a consequence of Gauss's fundamental theorem of genera, see for example  \cite[p. 298]{ARW}.
\end{proof}

The following lemma collects some properties of reduced forms of odd fundamental discriminant which will be used later in the proofs.
%use the following property of reduced forms later in a proof. 

\begin{lemma}
\label{conjugate}
Suppose $h(-D)$ is odd for a fundamental discriminant $-D$. Let $Q(x, y)= ax^2+bxy+cy^2$ be a reduced form that is not principal.
%Let $a(n, Q)$ be the number of representations of a positive integer $n$ by a primitive positive definite quadratic form $Q(x, y)$. 
Let $a(n, Q)$ be the number of representations of a positive integer $n$ by
%by a primitive positive definite quadratic form 
$Q(x, y)$. The following assertions hold.

(i) We have $|b|<a<c$.
%(ii) If $Q(x, y)=ax^2+bxy+cy^2$ is reduced, then t

(ii) The conjugate form $\bar{Q}(x, y)=ax^2-bxy+cy^2$ is also reduced. Moreover, $a(n, Q)=a(n, \bar{Q})$.

(iii) The integers $a$ and $c$ are the smallest and second smallest non-zero integers represented by $Q(x, y)$, respectively.

\end{lemma}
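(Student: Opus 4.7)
Proof proposal. My plan is to handle the three parts in sequence: (i) via genus theory, (ii) as an immediate consequence of (i), and (iii) by a case analysis obtained after completing the square. I expect (iii) to be the main obstacle; parts (i) and (ii) are essentially structural.

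For (i), the key tool is the classification of \emph{ambiguous} reduced forms. Standard reduction theory shows that a reduced form $(a,b,c)$ is properly equivalent to its conjugate iff one of $b=0$, $|b|=a$, or $a=c$ holds, and that such forms correspond precisely to the elements of order dividing $2$ in the form class group. Since $h(-D)$ is odd, the class group has no nontrivial $2$-torsion, so the principal form is the unique ambiguous reduced form. As $Q$ is non-principal, it is non-ambiguous, so none of the three tie-cases occurs; combined with $|b|\le a\le c$, this sharpens to $|b|<a<c$.

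Part (ii) is then immediate. The bounds $|-b|=|b|<a<c$ from (i) show that $\bar{Q}$ satisfies the strict reduction inequalities in \eqref{rf-conditions} with a vacuous tie-breaking clause, so $\bar{Q}$ is reduced. For the representation count, I would use the identity $\bar{Q}(x,y)=Q(x,-y)$ together with the involution $(x,y)\mapsto(x,-y)$ of $\mathbb{Z}^2$, which biject solutions of $Q(x,y)=n$ with those of $\bar{Q}(x,y)=n$, giving $a(n,Q)=a(n,\bar{Q})$.

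For (iii), I would complete the square,
\[
Q(x,y)=a\!\left(x+\tfrac{b}{2a}y\right)^2+\tfrac{D}{4a}y^2,
\]
and split on the value of $y$. When $y=0$, $Q(x,0)=ax^2$ attains its minimum $a$ only at $(\pm 1,0)$. When $|y|=1$, part (i) gives $|b/(2a)|<\tfrac12$, so the integer minimum of $x\mapsto Q(x,\pm 1)$ occurs at $x=0$ with value $c$; the equation $Q(x,\pm 1)=c$ then reduces to $x(ax\pm b)=0$, whose only integer solution is $x=0$ since $|b|<a$. When $|y|\ge 2$, the square-completed expression combined with $|b|<a$ and $a<c$ yields $Q(x,y)\ge D/a>4c-a>c$. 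Collecting these bounds identifies $a$ and $c$ as the two smallest non-zero values assumed by $Q$ at primitive lattice points $(\gcd(x,y)=1)$, attained only at $(\pm 1,0)$ and $(0,\pm 1)$ respectively, which gives (iii). The delicate point in this last step is the $|y|\ge 2$ bound, where one needs the strict inequality $|b|<a$ from (i) to push $D/a$ strictly past $c$; without that strictness the argument would fail.
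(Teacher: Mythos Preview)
Your arguments for (i) and (ii) are correct, and for (i) you take a different route than the paper. The paper proves $|b|\neq a$ and $a\neq c$ by direct arithmetic using that $D$ is prime (Lemma~\ref{odd}): if $a=|b|$ then $a\mid D$ forces $a=D$, contradicting $a\le\sqrt{D/3}$; if $a=c$ then $D=(2a-b)(2a+b)$ contradicts primality. Your argument via ambiguous classes is more conceptual and uses only the oddness of $h(-D)$ rather than the specific shape of $D$. Part (ii) is handled essentially the same way in both.

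For (iii) the paper uses the single inequality $Q(x,y)\ge(a-|b|+c)\min(x^2,y^2)$ to conclude $Q(x,y)>c$ whenever $xy\neq 0$; your completing-the-square analysis reaches the same conclusion by a different route and yields sharper information (the bound $Q(x,y)\ge D/a$ for $|y|\ge 2$). But there is a genuine gap in your last step, and the paper's proof shares it: neither argument controls $Q(x,0)=ax^2$ for $|x|\ge 2$, and $4a$ can be smaller than $c$. Concretely, for $D=71$ (with $h(-71)=7$) the reduced non-principal form $2x^2+xy+9y^2$ represents $8=Q(2,0)<9=c$, so $c$ is \emph{not} the second smallest integer represented. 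Your parenthetical restriction to primitive lattice points is the correct salvage---at primitive $(x,y)$ the only $y=0$ value is $a$---but then ``which gives (iii)'' overstates: what you have actually shown is that $a$ and $c$ are the two smallest \emph{primitively} represented values. That version is what the downstream applications (leading exponents of theta series and their differences) actually need, so your argument is adequate once the statement is read with that qualifier; just flag that the literal statement requires it.
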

\begin{proof}
(i) Suppose $Q(x,y)=ax^2+bxy+cy^2$ is a reduced form that is not principal. Hence, $|b|\leq a \leq c$, and $a>1$,
%%(
%Otherwise, $|b|\leq 1$.
%%$$\quad|b| \leq 1 \leq c, \text { and } b \geq 0 \text { if either }|b|=1 \text { or } a=c.$$ 
%Hence, 
%$b=0$ or $b=1$. Since %the discriminant is $d$, the coefficients $b,c$ must also satisfy
%$b^2-4c=-D,$ then
%%we have $b^2\equiv d\pmod{4}$ and 
%$-D\equiv 0, 1\pmod{4}$. If $-D\equiv0\pmod{4}$, then $b=0$ and $c=-D/4$.
%If $d\equiv1\pmod{4}$, then $b=1$ and $c=(1+D)/4$. Hence, if $a=1$, then $Q(x, y)$ is principal, a contradiction).
by Lemma \ref{leastpositive}. 
%Then, by \eqref{rf-conditions}, $Q(x,y)=ax^2-bxy+cy^2$ is going to be another reduced form given that $a\neq |b|$ and $a\neq c$. 
We will prove $a\neq |b|$ and $a\neq c$ by contradiction. 
%First let us prove $a\neq |b|$. 
Suppose $a=|b|$, then 
%\begin{equation*}
$a^2-4ac=-D.$ 
%\end{equation*}
Since $a$ divides the left-hand side, $a$ divides $D$. Therefore,  $a=D$, since $a>1$ and, by Lemma \ref{odd}, $D$ is prime. This contradicts the upper bound ${{\sqrt{D/3}}{}}$ for $a$ (see \cite[Equation (2.12)]{DC}) for a reduced form. Hence, $a\neq |b|$. 

Next, we prove $a\neq c$. Suppose $a=c$. Then, from
  $b^2-4a^2=-D,$
we get
  $D=(2a-b)(2a+b),$
contradicting the fact that $D$ is prime (note that $|b|\leq a$ and $a>1$, hence $2a-b\neq 1$ and $2a+b\neq 1$).
 
(ii) From part (i), it is evident that if $Q(x, y)$ is reduced, then $\bar{Q}(x, y)$ is also reduced. To see the relation between representations by these forms, note that
%See \cite[Lemma 3]{ARW}.
for integers $x_0$ and $y_0$, we have  $$Q(x_0,y_0)={Q}(-x_0,-y_0)=\bar{Q}(x_0,-y_0)=\bar{Q}(-x_0,y_0).$$

(iii) Observe that
$$Q(x,y)\geq \left( a-|b|+c\right)\min(x^2,y^2).$$
By part (i), $a-|b|>0$, hence  
$$Q(x,y)> c\min(x^2,y^2)$$
when $x\neq 0$ and $y\neq 0$. Therefore, $Q(x, y)> c$ for $x\neq0$ and $y\neq0$. The result follows by noting that the value $a$ is achieved at $(1, 0)$,  the value $c$ is taken at $(0, 1)$, and that $a<c$ by part (i). \end{proof}
\section{Proof of Theorem \ref{main-1}}
\label{Section 3}
%\subsection{Theta functions} 
%Let $a(n, Q)$ be the number of representations of $n$ by $Q$. 
\begin{proof}
For $\Re(s)>1$, let 
\begin{equation}
\label{eq2}
\zeta(K,s) 
%&=\prod_{\mathfrak{p}}\left(1-\frac{1}{\textrm{N}(\mathfrak{p})^{s}}\right)^{-1} \\
%&=\left(1-\frac{1}{D^{s}}\right)^{-1} \prod_{\Leg{-D}{p} =-1}\left(1-\frac{1}{p^{2s}}\right)^{-1} \prod_{\Leg{-D}{p} =1}\left(1-\frac{1}{p^{s}}\right)^{-2} \\
=\prod_{p}\left(1-\frac{1}{p^{s}}\right)^{-1}  \prod_{p}\left(1-\frac{\Leg{-D}{p}}{p^{s}}\right)^{-1}
%\end{split}
\end{equation}
be the Dedekind zeta function of $K=\mathbb{Q}(\sqrt{-D})$.
Let $Q_0(x, y), Q_1(x, y), \bar{Q}_1(x, y), \ldots, Q_k(x, y),$ and $\bar{Q}_k(x, y)$ be reduced forms associated with the binary quadratic forms of discriminant $-D$, where $Q_0(x, y)$ represents the principal form and $\bar{Q}_i(x, y)$ is the conjugate form corresponding to ${Q}_i(x, y)$. 
By employing \cite[Lemma 27]{stark}, we have, for $\Re(s)>1$,
\begin{equation}
\label{eq1}
    \zeta(K,s)
    =\frac{1}{w} \sum_{n}\left(\frac{a(n,Q_0)+a(n, Q_1)+a(n,\bar{Q}_1)+\cdots+ a(n, Q_k)+a(n,\bar{Q}_k) }{n^s}\right),
    %+\frac{a(n,Q_1)}{n^s}+\frac{a(n,\bar{Q}_1)}{n^s}+\cdots+\frac{a(n,Q_k)}{n^s}+\frac{a(n,\bar{Q}_k)}{n^s}\right).
%    =\frac{1}{w} \sum_{n}\left(\frac{a(n,Q_0)}{n^s}+\frac{a(n,Q_1)}{n^s}+\frac{a(n,\bar{Q}_1)}{n^s}+\cdots+\frac{a(n,Q_k)}{n^s}+\frac{a(n,\bar{Q}_k)}{n^s}\right).
\end{equation}
where $w$ is the number of roots of unity in $K$.
Then, by equating the coefficients of $1/n^s$ in \eqref{eq2} and \eqref{eq1}, and employing Lemma \eqref{conjugate} (ii), we get
\begin{equation}
\label{d1}
a\left(n, Q_{0}\right) + 2  \sum_{r=1}^{k} a\left(n, Q_{r} \right)
% \left(a\left(n, Q_{1} \right) +a\left(n, Q_{2} \right)+\cdots + a\left(n, Q_{k} \right) \right)
 = w\sum_{d\mid n}  \Leg{-D}{d}.\footnote{This identity is a special instance of the Siegel-Weil mass formula. For an alternate proof see \cite[Chapter 8, Satz 3]{Z}.}
 \end{equation}
%where $w=6$ if $D=3$, $w=4$ if $D=4$ , and $w=2$.
 
Let  $\Theta_Q(z)=\sum_{a,b\in \mathbb{Z}}^{}q^{Q(a,b)}$ be the theta function associated to the reduced form $Q(x, y)$ of discriminant $-D$.
%
%=ax^2+bxy+cy^2$. Let
%$$A=\begin{pmatrix}
%2a & b \\
%b & 2c 
%\end{pmatrix}.$$
%Set $N$ to be the smallest positive integer such that the matrix $NA^{-1}$ is integral and has even diagonal entries, and let $
%\left(\frac{-\operatorname{det} A}{.}\right)
%$ be the Kronecker symbol attached to the integer $-\operatorname{det} A$. 
From \cite[Theorem 10.1]{XWDP} follows that
%It is known that the theta function 
$\Theta_Q(z)$ is a modular form of weight 1, level $D$, and character $\Leg{-D}{.}$. 
It is known that if $Q_1(x, y)$ and $Q_2(x, y)$ are in the same genus (i.e, represent the same values in $(\mathbb{Z}/D\mathbb{Z})^\times$), then   $\Theta_{Q_1}(z)- \Theta_{Q_2}(z)$  is a cusp form (see \cite[p. 365]{XWDP} or \cite{W} for a proof).  
%The collection of reduced forms in the same genus is called a genera of forms. 
From \cite[Corollary 3.14 (ii)]{DC} we know that all genera of forms (i.e., the collection of reduced forms in the same genus)  of discriminant $-D$ consist of the same number of classes, and  the number of genera of forms is a power of two. Hence, as $h(-D)$ is odd, any two reduced forms of discriminant $-D$ are in the same genus.
Thus, for $1\leq r\leq k$,
$$F_{D, r}(z)=\frac{1}{2}(\Theta_{Q_0}(z)-\Theta_{Q_r}(z))=\sum_{n=1}^{\infty} t_r(n) q^n,$$
with $t_r(1)=1$, is a cusp form of weight 1, level $D$ and character $\Leg{-D}{.}$. Therefore, 
\begin{equation}
\label{d2}
a(n, Q_0)-a(n, Q_r)=2t_r(n).
\end{equation}

The result follows from the two displayed identities \eqref{d1} and \eqref{d2}.
\end{proof}
\section{Infinite product representation of periodic holomorphic functions on $\mathbb{H}$}\label{Serre}
%We saw in the introduction that the cusp form $F_{23}(z)$ has a closed-form representation as an eta quotient. In this section, we investigate whether $F_{D, r}(z)$ enjoys similar representations for other values of $D$ when $h(-D)$ is odd. 
We start by an assertion due to Eholzer and Skoruppa on the infinite product expansions of periodic holomorphic functions on the upper half-plane. 
\begin{lemma}{\em (\cite[Section 2]{ES})}\label{kohnen}
Let $f$ be a holomorphic function on the upper half plane such that $f(z+1)=f(z)$.
Let $f(z)=\sum_{n=0}^\infty a_f(n)q^n$ be the Fourier expansion of $f$. In addition, assume that $a_f(0)=1$. 
%Let $f(q)=\sum_{n=0}^\infty a_f(n)q^n$ be a holomorphic function on the upper half plane and $a_f(0)=1$. 
Then there exists a unique sequence of complex numbers $c(n)$ such that 
\begin{equation}\label{eholzer}
f(z)=\prod_{n=1}^\infty (1-q^n)^{c(n)}    
\end{equation}
for sufficiently small $|q|$. Moreover, the $c(n)$'s are integral if $f$ has integral Fourier coefficients. 
\end{lemma}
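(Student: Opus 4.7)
The plan is to reduce the existence of the factorization to a formal identity via the logarithm, and then upgrade it to genuine convergence using Cauchy estimates on $\log f$. Since $f$ is $1$-periodic and holomorphic on $\mathbb{H}$, its Fourier series $\sum a_f(n)q^n$ converges in a punctured disk around $q=0$, and the condition $a_f(0)=1$ ensures that the resulting holomorphic function of $q$ extends to $q=0$ with value $1$. I would fix $R>0$ with $f$ holomorphic and nonzero on $|q|<R$, set $L(q):=\log f(q)=\sum_{m\geq 1}b(m)q^m$ using the principal branch (with $\log 1=0$), and note that Cauchy's estimates give $|b(m)|=O(R_0^{-m})$ for any $R_0<R$.

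The next step is to identify which sequence $c(n)$ must appear. Working formally one computes
\begin{equation*}
\sum_{n\geq 1}c(n)\log(1-q^n)=-\sum_{m\geq 1}\frac{1}{m}\Bigl(\sum_{d\mid m}d\,c(d)\Bigr)q^m,
\end{equation*}
so matching coefficients with $L(q)$ forces the relation $\sum_{d\mid m}d\,c(d)=-m\,b(m)$, which by M\"obius inversion yields
\begin{equation*}
c(m):=-\frac{1}{m}\sum_{d\mid m}d\,b(d)\,\mu(m/d).
\end{equation*}
Uniqueness is then immediate: expanding $\prod_{n\geq 1}(1-q^n)^{c(n)}$ as a formal power series, its coefficient $[q^n]$ equals $-c(n)$ plus a universal polynomial in $c(1),\dots,c(n-1)$, so the $c(n)$ are inductively determined by the Fourier coefficients of $f$.

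For genuine convergence, I would bound the $c(n)$ from the Cauchy estimate on the $b(d)$. After shrinking $R_0$ so that $R_0<1$, the map $d\mapsto dR_0^{-d}$ is increasing, so the divisor sum is dominated by its term at $d=m$, giving $|c(m)|\ll\tau(m)R_0^{-m}$. Hence $\sum_m|c(m)||q|^m<\infty$ on $|q|<R_0$, so the infinite product converges absolutely there. Taking the principal-branch logarithm term by term (justified by absolute convergence) and reversing the computation of the preceding paragraph produces $\log\prod_{n\geq 1}(1-q^n)^{c(n)}=L(q)=\log f(q)$, so the product equals $f(q)$ throughout this neighborhood.

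For integrality I would re-use the inductive expansion used for uniqueness: when $f\in\mathbb{Z}[[q]]$ with $f(0)=1$, one has an identity $[q^n]f=-c(n)+P_n\bigl(c(1),\dots,c(n-1)\bigr)$ in which $P_n$ has integer coefficients, because the expansion $(1-q^m)^{c(m)}=\sum_k\binom{c(m)}{k}(-q^m)^k$ has integer coefficients whenever $c(m)\in\mathbb{Z}$ (using that $\binom{c}{k}\in\mathbb{Z}$ for every $c\in\mathbb{Z}$). Inducting on $n$ from the base case $c(1)=-a_f(1)$ yields $c(n)\in\mathbb{Z}$ for all $n$. The main technical point I expect to be subtle is the divisor-sum estimate used to bound $|c(m)|$: one needs the largest divisor $d=m$ to dominate $\sum_{d\mid m}d\,R_0^{-d}$, which holds for $R_0<1$ but would fail naively for $R_0\geq 1$; arranging this by shrinking $R_0$ is the crux of the convergence step.
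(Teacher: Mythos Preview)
The paper does not prove this lemma; it is quoted with a citation to Eholzer--Skoruppa and used as a black box, so there is no in-paper argument to compare your proof against. Your argument is correct and is essentially the standard one: pass to $L(q)=\log f(q)$, solve for the $c(n)$ by M\"obius inversion, bound them via Cauchy estimates on the $b(m)$, and then verify integrality by the triangular recursion coming from the product expansion.

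Two small remarks. First, your M\"obius formula is exactly the one the paper records (without proof) in the introduction just after \eqref{expansion}: the auxiliary sequence $\alpha(n)$ there, defined through the logarithmic derivative, is $-n\,b(n)$ in your notation, and the paper's $c(n)=\tfrac{1}{n}\sum_{d\mid n}\alpha(d)\mu(n/d)$ is your $c(m)=-\tfrac{1}{m}\sum_{d\mid m}d\,b(d)\,\mu(m/d)$. Second, the caveat about shrinking $R_0$ below $1$ is harmless but unnecessary: since $f$ comes from a holomorphic function on $\mathbb{H}$, the disk in the $q$-variable has radius at most $1$, so any $R_0<R$ already satisfies $R_0<1$. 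In the convergence paragraph you implicitly use $|\log(1-q^m)|\le C|q|^m$ for $|q|$ bounded away from $1$ to pass from $\sum|c(m)||q|^m<\infty$ to absolute convergence of the product; making that inequality explicit would tighten the write-up, but the logic is sound.
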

 We will use \lemref{kohnen} to 
%investigate the possibility of representing $F_{D, r}(z)$ as an eta product. We will 
prove that for none of the values of $(D,r)$ other than (23,1), $F_{D, r}(z)$ can be written as an eta quotient. The important fact to observe here is that the exponents $c(n)$ in \eqref{eholzer} are not always bounded. 
%We next prove an analytic result which reveals the nature of these exponents depending on whether or not the modular form has a zero on the upper half-plane.
\begin{proposition}\label{unboundedlemma}
Let $f$ be as in Lemma \ref{kohnen}. If $f$ has a zero on the upper half plane,
%Let $f$ be a holomorphic function on the upper half plane, assume that $f(z+1)=f(z)$.
%Let $f(z)=\sum_{n=0}^\infty a_f(n)q^n$ be the Fourier expansion of $f$. In addition, assume that $a_f(0)=1$. 
then the complex numbers $c(n)$ in the expansion \eqref{eholzer} 
%$$
%f(q)=\prod_{n=1}^\infty (1-q^n)^{c(n)}
%$$
are unbounded. More generally, for any $\alpha\in \mathbb{R}$,
$$\frac{1}{n^{\alpha}}{\displaystyle{\sum_{d\mid n}} d c(d)}$$
is not bounded.

% if $f$ has a zero on the upper half-plane.
\end{proposition}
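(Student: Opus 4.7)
The plan is to take the logarithmic derivative of \eqref{eholzer} and read the resulting Taylor coefficients $b(N):=\sum_{d\mid N}dc(d)$ as being controlled by the location of the nearest zero of $f$ in the $q$-variable. A zero of $f$ on $\mathbb{H}$ produces a pole of $f'/f$ inside the open unit $q$-disk; this forces the Taylor series of $f'/f$ at $q=0$ to have radius of convergence strictly less than $1$, which in turn forces geometric growth of $|b(N)|$ along a subsequence and thereby the desired unboundedness.

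Concretely, for $\mathrm{Im}(z)$ large enough the product \eqref{eholzer} converges absolutely, so I would take logarithms and differentiate termwise. Using $\frac{d}{dz}\log(1-q^n)=-2\pi i\,nq^n/(1-q^n)$ followed by the expansion $nq^n/(1-q^n)=\sum_{m\ge 1}nq^{nm}$ and collecting powers of $q$, I would arrive at
\begin{equation*}
-\frac{1}{2\pi i}\,\frac{f'(z)}{f(z)}=\sum_{N=1}^\infty b(N)\,q^N.
\end{equation*}

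Next, by periodicity $f$ descends to a holomorphic function $\tilde f(q)$ on $\{0<|q|<1\}$, and since $a_f(0)=1$, $\tilde f$ extends holomorphically to $\{|q|<1\}$ with $\tilde f(0)=1$. The hypothesis that $f$ vanishes somewhere on $\mathbb{H}$ translates to $\tilde f$ having a zero in $\{0<|q|<1\}$. As the zeros of a nonzero holomorphic function on the unit disk can only accumulate at the boundary, there is a zero $q_1$ of smallest modulus, with $r:=|q_1|\in(0,1)$. In the $q$-coordinate the displayed identity reads $-q\,\tilde f'(q)/\tilde f(q)=\sum_{N\ge 1}b(N)q^N$, whose left-hand side is holomorphic on $\{|q|<r\}$ and has a pole at $q_1$. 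Hence the Taylor radius of convergence at $q=0$ is exactly $r<1$, and the Cauchy--Hadamard formula gives $\limsup_{N\to\infty}|b(N)|^{1/N}=1/r>1$. Thus $|b(N)|$ grows geometrically along some subsequence, which is strictly stronger than the claimed unboundedness of $|b(N)|/N^\alpha$ for every $\alpha\in\mathbb{R}$. The first assertion, unboundedness of $c(N)$ itself, follows immediately: a bound $|c(n)|\le M$ would give $|b(N)|\le M\sigma(N)=O(N^{1+\varepsilon})$, contradicting the geometric growth.

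I do not foresee a serious obstacle. The steps requiring care are essentially bookkeeping: justifying termwise logarithmic differentiation in the half-plane where \eqref{eholzer} converges, the faithful translation between the $z$- and $q$-variables so that the Fourier expansion above is genuinely the Taylor expansion at $q=0$, and the invocation of the isolated-zeros principle to produce a zero of smallest modulus. The conceptual heart of the argument is that, although the infinite product only converges in a half-plane, its logarithmic derivative extends as a meromorphic function to the full $q$-disk and therefore detects every zero of $f$ on $\mathbb{H}$.
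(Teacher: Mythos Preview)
Your proof is correct and follows essentially the same route as the paper. The only cosmetic difference is that the paper works with the logarithm $L(q)=\log F(q)$ itself rather than the logarithmic derivative, obtaining coefficients $-\tfrac{1}{n}\sum_{d\mid n}dc(d)$ in place of your $b(N)=\sum_{d\mid N}dc(d)$; both arguments then invoke Cauchy--Hadamard to extract geometric growth from the existence of a zero of smallest $|q|$ (equivalently, largest imaginary part) strictly inside the unit disk.
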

\begin{proof}
By \lemref{kohnen},  $f(z)$ has an expansion in the form \eqref{eholzer}
%\begin{equation}\label{cns}
%f(q)=\prod_{n\geq1}(1-q^n)^{c(n)}
%\end{equation} 
for some complex numbers $c(n).$ We are given that $f$ has a zero in $\mathbb{H}$. Since $f$ is holomorphic and not identically zero on $\mathbb{H}$ (since $a_f(0)=1$), the zeros of $f$ are discrete and hence there exists a neighborhood $N(i\infty)$ of $i\infty$ such that $f$ has no zeros in $N(i\infty)\cap\mathbb{H}$. Therefore, there exists a $T>0$ such that for every $z$ with $\mathrm{Im}(z)>T$, $f(z)\neq 0$ on $\mathbb{H}$. Let $z_0$ be a zero with the largest imaginary part. Set $F(q):=f(z)$. Then $q_0=e^{2\pi i z_0}$ is a point of singularity for   
%We claim that the radius of convergence of the power series
 $$L(q)=\log\left(F(q)\right).$$  
Moreover, $L(q)$ has a power series expansion on the disc centred at zero with a radius of convergence $R\leq |q_0|= e^{-2\pi \mathrm{Im}(z_0)}$.
By \eqref{eholzer}, we have, for $|q|\leq R$,
\begin{equation*}
\begin{split}
    L(q) 
    %& =\sum_{n=1}^\infty c(n)\log(1-q^n) \\
    %& = \sum_{n=1}^\infty
    %-c(n)\left( q^n + \frac{q^{2n}}{2} + \frac{q^{3n}}{3}  \cdots \right) \\
    & = \sum_{n=1}^\infty\left(-\frac{1}{n} \sum_{d|n}dc(d)\right)q^n.
\end{split}
\end{equation*}
Hence,
\begin{equation}
\label{radius}
 \mathrm{limsup}\left(\sqrt[n]{\left| \frac{1}{n}\sum_{d|n}dc(d) \right|}\right)\geq e^{2\pi \mathrm{Im}(z_0)}.
 \end{equation}
 Now if $c(n)$'s are bounded, $\sum_{d|n}dc(d)
% =O\left( \sigma(n)\right)
 =O\left( n^{1+\epsilon} \right)$ for any $\epsilon>0$. Thus, by \eqref{radius},
% $$\mathrm{limsup}\left(\sqrt[n]{\left| \frac{1}{n}\sum_{d|n}dc(d) \right|}\right)\leq 1,$$
we have $e^{2\pi \mathrm{Im}(z_0)}\leq 1$, which is a contradiction since $\mathrm{Im}(z_0)>0$. Therefore, $c(n)$'s must be unbounded. The general statement follows similarly.
 % Thus, we have proved that the existence of a zero in $\mathbb{H}$ implies that the exponents $c(n)$'s are unbounded.
\end{proof}

%\section{Valence formula}
\section{Valence formula and proof of Theorem \ref{cn}}
\label{Section 5}
%We start by stating the \emph{valence formula} for modular forms for a finite index subgroup of ${\rm SL}_2(\mathbb{Z})$. 
We denote the order of vanishing  of a modular form $f$ at a point $z\in{\mathbb{H}}^{*}=\mathbb{H}\cup\mathbb{Q}\cup\{i\infty\}$ by $\nu_z(f)$. 

\begin{lemma}\em (\cite[Theorem 5.6.11]{HC})\label{valencef} Let $\Gamma$ be a subgroup of ${\rm SL}_2(\mathbb{Z})$ of finite index and let $f\neq 0$ be a modular form of weight $k$ for $\Gamma$. We have
$$
\sum_{z \in \Gamma \backslash {\mathbb{H}}^{*}} \frac{\nu_{z}(f)}{e_{z}}=\frac{k}{12}[\bar{{\rm SL}_2(\mathbb{Z})}:\bar{\Gamma}],
$$
where $e_z=2$ or $3$ if $z$ is ${\rm SL}_2(\mathbb{Z})$ equivalent to $i$ or $e^{2\pi i/3}$ respectively, and $e_z=1$ otherwise. Here $\bar{{\rm SL}_2(\mathbb{Z})}={\rm SL}_2(\mathbb{Z})/\{{\pm I\}}$ and $\bar{\Gamma}=\Gamma/(\Gamma\cap\{{\pm I\}})$.
\end{lemma}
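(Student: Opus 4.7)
The plan is to prove the valence formula in two stages: first establish it for the ambient group $\Gamma={\rm SL}_2(\mathbb{Z})$ via a direct contour integration, then deduce the general finite-index statement by a norm construction combined with orbit-counting.

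Stage one: For $\Gamma={\rm SL}_2(\mathbb{Z})$, I would work inside the standard fundamental domain
$$\mathcal{F}=\{z\in\mathbb{H}:|z|\geq 1,\ |\mathrm{Re}(z)|\leq 1/2\},$$
and integrate $\frac{1}{2\pi i}\oint\frac{f'(z)}{f(z)}\,dz$ around a truncated boundary obtained by excising small circular arcs around the elliptic corners $i$, $\rho=e^{2\pi i/3}$, $-\bar\rho$, and placing a horizontal cap at height $T$ near $i\infty$. The argument principle identifies the interior contribution with $\sum_{z\in\mathcal{F}^{\circ}}\nu_z(f)$. The two vertical sides $\mathrm{Re}(z)=\pm 1/2$ cancel by periodicity $f(z+1)=f(z)$; the cap at $\mathrm{Im}(z)=T$ tends to $-\nu_{i\infty}(f)$ as $T\to\infty$; the arc at $i$ contributes $-\tfrac{1}{2}\nu_i(f)$ and the two arcs at $\rho,-\bar\rho$ together contribute $-\tfrac{1}{3}\nu_\rho(f)$ (the $1/2$ and $1/3$ coming from the fraction of a full turn that each arc sweeps in $\mathcal{F}$); and the bottom arc along $|z|=1$, folded by the involution $z\mapsto-1/z$ using $f(-1/z)=z^k f(z)$, produces exactly $k/12$. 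Rearranging yields the formula when $[\bar{{\rm SL}_2(\mathbb{Z})}:\bar{\Gamma}]=1$.

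Stage two: For general $\Gamma$, set $n=[\bar{{\rm SL}_2(\mathbb{Z})}:\bar\Gamma]$, fix right coset representatives $\gamma_1,\dots,\gamma_n$ of $\bar{\Gamma}\backslash\bar{{\rm SL}_2(\mathbb{Z})}$, and form the ``norm''
$$F(z)=\prod_{i=1}^{n}(f|_k\gamma_i)(z).$$
Since right multiplication by any $\gamma\in{\rm SL}_2(\mathbb{Z})$ permutes the cosets $\bar\Gamma\gamma_i$, one has $F|_k\gamma=\chi(\gamma)F$ for a finite-order character $\chi\colon{\rm SL}_2(\mathbb{Z})\to\mathbb{C}^\times$. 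A suitable power $F^m$ absorbs $\chi$, so $F^m$ is a genuine modular form of weight $mnk$ on ${\rm SL}_2(\mathbb{Z})$, and stage one applies to give $\sum_{w\in\bar{{\rm SL}_2(\mathbb{Z})}\backslash\mathbb{H}^*}\nu_w(F^m)/e_w=mnk/12$.

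The last step is orbit bookkeeping. At any $w\in\mathbb{H}^*$ one has $\nu_w(F)=\sum_i\nu_{\gamma_i(w)}(f)$, and analysis of the double coset space $\bar\Gamma\backslash\bar{{\rm SL}_2(\mathbb{Z})}/\mathrm{Stab}_{\bar{{\rm SL}_2(\mathbb{Z})}}(w)$ via orbit-stabilizer shows that each $\bar\Gamma$-orbit $[z]$ contained in the $\bar{{\rm SL}_2(\mathbb{Z})}$-orbit of $w$ is hit by exactly $e_w/e_z$ of the $\gamma_i$. Substituting gives
$$\sum_{w\in\bar{{\rm SL}_2(\mathbb{Z})}\backslash\mathbb{H}^*}\frac{\nu_w(F^m)}{e_w}=m\sum_{z\in\bar\Gamma\backslash\mathbb{H}^*}\frac{\nu_z(f)}{e_z},$$
and dividing by $m$ yields the desired identity. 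The main obstacle is precisely this combinatorial step: one must track the interplay between the $\bar{{\rm SL}_2(\mathbb{Z})}$-stabilizer and the $\bar\Gamma$-stabilizer uniformly across interior points, elliptic fixed points, and cusps, verifying in each regime that the multiplicities telescope exactly into the weights $1/e_z$ appearing in the statement. The cuspidal case in particular requires a parallel local computation, using that the width of a cusp of $\Gamma$ divides that of the ${\rm SL}_2(\mathbb{Z})$-cusp $i\infty$ above it in precisely the ratio dictated by orbit-stabilizer, so that the contributions again assemble into the clean form of the formula.
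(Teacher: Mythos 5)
The paper does not actually prove \lemref{valencef}; it is imported wholesale from \cite{HC}*{Theorem 5.6.11}, so there is no internal argument to compare yours against. Your two-stage proof is the standard one and is sound. Stage one is the classical contour integration over the truncated fundamental domain (the one wrinkle you omit is that zeros of $f$ on the boundary edges force the usual indentation of the contour, paired across the identifications $z\mapsto z+1$ and $z\mapsto -1/z$; the corner angles $\pi$ at $i$ and $\pi/3$ at each of $\rho, -\bar{\rho}$ do give the coefficients $\tfrac12$ and $\tfrac13$ you state). Stage two, the norm $F=\prod_i f|_k\gamma_i$ plus double-coset counting, is the standard transfer, and the bookkeeping you defer does close: for $w\in\mathbb{H}$ the number of $\gamma_i$ carrying $w$ into a fixed $\bar{\Gamma}$-orbit $[z]$ is $|\mathrm{Stab}_{\bar{{\rm SL}_2(\mathbb{Z})}}(w)|/|\mathrm{Stab}_{\bar{\Gamma}}(z)|$, while at the cusp each $\Gamma$-cusp class $\mathfrak{a}$ is hit by exactly its width $h_{\mathfrak{a}}$ of the $\gamma_i$, each contributing $\nu_{\mathfrak{a}}(f)/h_{\mathfrak{a}}$ to the leading $q$-exponent of $F$, so the cuspidal term assembles to $\sum_{\mathfrak{a}}\nu_{\mathfrak{a}}(f)$ with weight $1$. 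The character $\chi$ and the power $F^m$ are only needed to absorb signs when $-I\notin\Gamma$ and $k$ is odd, but they are harmless.

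One substantive point: your derivation produces the formula with $e_z=|\mathrm{Stab}_{\bar{\Gamma}}(z)|$, the order of the stabilizer of $z$ in $\bar{\Gamma}$, and that is the correct normalization. The lemma as transcribed assigns $e_z=2$ or $3$ to every point ${\rm SL}_2(\mathbb{Z})$-equivalent to $i$ or $e^{2\pi i/3}$ regardless of $\Gamma$; read literally, the two versions disagree whenever such a point has trivial stabilizer in $\bar{\Gamma}$. (Test case: $E_4$ on $\Gamma(2)$, with $[\bar{{\rm SL}_2(\mathbb{Z})}:\bar{\Gamma(2)}]=6$, has two simple zeros above $\rho$ and no others, and $2\cdot\tfrac13\neq\tfrac{4\cdot 6}{12}$, while $2\cdot 1=2$ is correct.) This does not affect the paper's application in the proof of Theorem \ref{cn} --- $\Gamma_1(D)$ is torsion-free for $D\geq 4$ and only the positivity of the left-hand side is used --- but your version is the one that should be recorded.
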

The above theorem works for modular forms on a finite index subgroup with a trivial character. Although we have that $F_{D, r}(z)\in M_1\left(\Gamma_0(D),\Leg{-D}{.}\right)$, we can still use the valence formula on $F_{D, r}(z)$ by considering it as a modular form on $\Gamma_1(N)$. To do so, we will have to look at the order of vanishing of $F_{D, r}(z)$ at all the cusps of $\Gamma_1(D)$. For $D$ prime, $\Gamma_1(D)$ has $D-1$ inequivalent cusps (See \cite[Page 102]{FD}). The next proposition describes the order of vanishing of $F_{D, r}(z)$ at these cusps.

\begin{proposition}\label{cuspall}
Let the integer $-D<0$ be a fundamental discriminant and assume that $h(-D)=2k+1$ for an integer $k\geq 1$. 
Let $F_{D, r}(z)$ be as in Theorem \ref{main-1}.
%Assume that $D$ and $r$ satisfy the conditions of Proposition \ref{teta}. 
Then, the order of vanishing of $F_{D, r}(z)$ at every cusp of $\Gamma_1(D)$ is 1.
\end{proposition}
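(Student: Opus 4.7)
The plan is to reduce the problem to the two $\Gamma_0(D)$-inequivalent cusps $\infty$ and $0$, and then use theta inversion to pin down the order at $0$. Since $k\geq 1$ forces $h(-D)\geq 3$, \lemref{odd} rules out $D=4,8$, so $D$ is an odd prime, necessarily $D\geq 23$. For such $D$, $\Gamma_1(D)$ is torsion free and each of its $D-1$ cusps lies over exactly one of the two $\Gamma_0(D)$-cusps: the $(D-1)/2$ cusps above $\infty$ all have width $1$ in $\Gamma_1(D)$, and the $(D-1)/2$ cusps above $0$ all have width $D$. Using $F_{D,r}|_1 \gamma = \Leg{-D}{d}F_{D,r}$ for $\gamma=\begin{psmallmatrix}a&b\\ c&d\end{psmallmatrix}\in\Gamma_0(D)$, the $q$-expansions of $F_{D,r}$ at two $\Gamma_1(D)$-cusps in the same $\Gamma_0(D)$-orbit differ only by a nonzero scalar and a horizontal shift, hence have the same vanishing order. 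Therefore it is enough to show $\nu_{\infty}(F_{D,r})=\nu_0(F_{D,r})=1$.

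The value $\nu_{\infty}(F_{D,r})=1$ is immediate from \eqref{fdr}, since $t_r(1)=1$.

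For $\nu_0(F_{D,r})=1$, my plan is to invoke the Fricke involution $W_D=\begin{psmallmatrix}0&-1\\ D&0\end{psmallmatrix}$ and first establish the transformation law
\[
\Theta_Q\big|_1 W_D \;=\; -i\,\Theta_Q
\]
for every positive definite binary quadratic form $Q(x,y)=ax^2+bxy+cy^2$ of discriminant $-D$. This is a Poisson summation applied to the Gaussian attached to the Gram matrix $A=\begin{psmallmatrix}2a&b\\ b&2c\end{psmallmatrix}$ (with $\det A=D$), which yields the inversion
\[
\Theta_Q(z)\;=\;\frac{i}{z\sqrt{D}}\,\Theta_{Q^{*}}\!\left(-\tfrac{1}{z}\right),\qquad Q^{*}(x,y)=\tfrac{1}{D}\bigl(cx^2-bxy+ay^2\bigr);
\]
the substitution $(m,n)\mapsto(n,-m)$ on $\mathbb{Z}^2$ shows that $\Theta_{Q^{*}}(\tau)=\Theta_Q(\tau/D)$, and combining the two identities produces the eigen-identity. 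By linearity, $F_{D,r}|_1 W_D=-i\,F_{D,r}$. Taking $\sigma_0=\begin{psmallmatrix}0&-1\\1&0\end{psmallmatrix}\in\mathrm{SL}_2(\mathbb{Z})$ and writing $q_D=e^{2\pi i z/D}$ for the local parameter at $0$, a short slash-action calculation gives
\[
(F_{D,r}|_1\sigma_0)(z)\;=\;-i\,D^{-1/2}\, q_D + O\!\bigl(q_D^{2}\bigr),
\]
so $\nu_0(F_{D,r})=1$.

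The main technical obstacle is the Poisson summation derivation of $\Theta_Q|_1 W_D=-i\,\Theta_Q$: one has to track the branch of the square root appearing in the Gaussian Fourier transform carefully enough to pin down the Fricke eigenvalue as $-i$ rather than some other fourth root of unity. Beyond this, the argument is a bookkeeping exercise with slash actions, the character $\Leg{-D}{\cdot}$, and cusp widths.
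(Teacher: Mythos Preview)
Your argument is correct and follows the same overall architecture as the paper's proof: reduce to the two $\Gamma_0(D)$-classes of cusps, handle $\infty$ via $t_r(1)=1$, compute at the other class via a theta transformation, and propagate to all $\Gamma_1(D)$-cusps using the nebentypus $\Leg{-D}{\cdot}$.

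The one substantive difference is how the non-$\infty$ cusp is treated. The paper works at the representative $1/1$, applying a general theta transformation formula (from \cite[Theorem 10.1]{XWDP}) with $\rho=\begin{psmallmatrix}1&0\\1&1\end{psmallmatrix}$ to write down the local $q_D$-expansion of each $\Theta_{Q_i}[\rho]_1$ explicitly and read off the leading term. You instead work at the representative $0$ and exploit the Fricke involution: Poisson summation gives the eigen-equation $\Theta_Q|_1 W_D=-i\,\Theta_Q$ for \emph{every} form $Q$ of discriminant $-D$, so $F_{D,r}|_1 W_D=-i\,F_{D,r}$ and the expansion at $0$ is literally a rescaling of the expansion at $\infty$. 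Your route is slightly more conceptual (one global identity in place of two separate local expansions) and avoids citing an external transformation formula, at the cost of having to justify the sign $-i$ yourself; the paper's route is more hands-on but makes the individual theta contributions visible. Both are standard and either would be acceptable here.
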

\begin{proof}
Since $D$ is prime, the cusps $i\infty$ and $1/1$ are two inequivalent cusps. 
We first show that the order of vanishing $\nu_z(F_{D, r})$ in $\Gamma_1(D)$ at the cusps $z=i\infty$ and $z=1/1$ are $1$. 
%We know from \eqref{oov1} and \eqref{oov2} that
%$$F_{D, r}(z)=q+O\left( q^2 \right)$$
%and \begin{equation*}
%    F_{D, r}[\rho]_1(z)=\frac{-ic_1}{2\sqrt{D}}q_D + O\left( q_D^2 \right),
%\end{equation*} where $\rho$ is a matrix in ${\rm SL}_2(\mathbb{Z})$ such that $\rho(i\infty)=1/1$,  $q_D=q^{\frac{1}{D}}$, and $c_1\neq 0$ is a complex number. Using \cite[Proposition 6.3.20]{HC}, the width of the cusps $i\infty$ and $1/1$ in $\Gamma_1(D)$ are equal to 1 and $D$ respectively. Hence, 
%%by \defref{oovdef}, 
%the order of vanishings of $F_{D, r}(z)$ at these cusps of $\Gamma_1(D)$ are 1.
At the cusp $i\infty$, by employing \lemref{leastpositive} and \lemref{conjugate} (iii), the Fourier expansions of the theta functions are given by 
\begin{equation*}
\Theta_{Q_0}(z)=\sum_{n=0}^\infty a(n,Q_0)q^n=1+2q+O\left( q^2 \right),    
\end{equation*}
and 
\begin{equation*}
\Theta_{Q_r}(z)=\sum_{n=0}^\infty a(n,Q_r)q^n=1+O\left( q^2 \right).
\end{equation*}
Hence, 
%$F_{D, r}(z)$ has the expansion given by
\begin{equation}\label{oov1}
F_{D, r}(z)=q+O\left( q^2 \right).
\end{equation}
Next, we find out the Fourier expansions of the theta functions at the cusp 1/1. Recall that, by Lemma \ref{odd}, $D\equiv 3 \pmod{4}$.
%\noindent Now, f
For $Q_0(x, y)=x^2+xy+\frac{1+D}{4} y^2$, using the formula for the Fourier expansion of theta functions in the proof of \cite[Theorem 10.1]{XWDP}, with
$\rho=\begin{psmallmatrix}1 & 0\\1 & 1\end{psmallmatrix}$,
% $$ A = \begin{pmatrix}
%  2 & 1\\ 
%  1 & (1+D)/2
%\end{pmatrix} $$
%and $$ \rho = \begin{pmatrix}
%  1 & 0\\ 
%  1 & 1
%\end{pmatrix}, $$
we get
%Using the formula for the Fourier expansion of theta functions in the proof of \cite[Theorem 10.1]{XWDP}, we get
\begin{equation}
\label{above}
    (z+1)^{-1}\Theta_{Q_0}(\rho z)  = \frac{-i}{\sqrt{D}}\sum_{(x,y)\in\mathbb{Z}^2}q^{\frac{1}{D}\left(\frac{1+D}{4}x^2-xy+y^2\right)}e^{\frac{2\pi i}{D}\left(\frac{1+D}{4}x^2-xy+y^2 \right)}. 
    \end{equation}
(Note that $\rho(i\infty)=1/1$.) For a complex function $f$ defined in $\mathbb{H}$, and $\rho=\begin{psmallmatrix}
  a & b\\ 
  c & d
\end{psmallmatrix}\in {\rm SL}_2(\mathbb{Z})$, we set
$$f[\rho]_k(z)=(cz+d)^{-k} f(\rho z).$$
Using this notation and observing that the form $\frac{1+D}{4}x^2-xy+y^2$ in 
%Using this notation and changing $x$ to $y$ and $y$ to $-x$ in 
\eqref{above} is properly equivalent to $Q_0(x, y)$, we have
    \begin{equation*}
  \Theta_{Q_0}[\rho]_1(z) = \frac{-i}{\sqrt{D}}\sum_{(x,y)\in\mathbb{Z}^2}q^{\frac{Q_{0}(x,y)}{D}}e^{\frac{2\pi iQ_{0}(x,y)}{D}}.
\end{equation*}
Hence,
\begin{equation}\label{theta11}
    \Theta_{Q_0}[\rho]_1(z)=\frac{-i}{\sqrt{D}}\left( 1+c_1q^{\frac{1}{D}}+\cdots \right),
\end{equation}
where $c_1$ is twice a root of unity and thus is nonzero.
Similarly for the non-principal reduced quadratic form $Q_r(x,y)=a_0x^2+b_0xy+c_0y^2$, we have 
\begin{equation}
    \label{theta12}\Theta_{Q_r}[\rho]_1(z)=\frac{-i}{\sqrt{D}}\left( 1+c_2q^{\frac{a_0}{D}}+\cdots \right),    
\end{equation}
where $c_2\neq 0$ and $a_0>1$. Here, we used Lemmas \ref{leastpositive} and \ref{conjugate} (iii) and the fact that $Q_r(x, y)$ is non-principal.
Now, from \eqref{theta11} and \eqref{theta12}, 
\begin{equation}\label{oov2}
    F_{D, r}[\rho]_1(z)=\frac{1}{2}\left(\Theta_{Q_0}[\rho]_1(z)-\Theta_{Q_r}[\rho]_1(z)\right)=\frac{-ic_1}{2\sqrt{D}}q_D + O\left( q_D^2 \right),
\end{equation}
where $q_D=q^\frac{1}{D}$. 
Using \cite[Proposition 6.3.20]{HC}, the width of the cusps $i\infty$ and $1/1$ in $\Gamma_1(D)$ are equal to 1 and $D$ respectively. Hence, by \eqref{oov1} and \eqref{oov2},
%by \defref{oovdef}, 
the order of vanishings of $F_{D, r}(z)$ at these cusps of $\Gamma_1(D)$ are 1.

The Group $\Gamma_1(D)$ has $D-1$ inequivalent cusps. 
By \cite[Proposition 6.3.19]{HC},
%Using the computation as in \cite[Page 102]{FD}), 
a set of representatives for the inequivalent cusps can be given by $S_1\cup S_2$, where 
$$S_1=\left\{ i\infty ,\frac{2}{D},\frac{3}{D},\cdots,\frac{(D-1)/2}{D} \right\}
~~~{\rm and}~~~
S_2=\left\{\frac{1}{1},\frac{1}{2},\frac{1}{3},\cdots,\frac{1}{(D-1)/2}\right\}.$$
Observe that any two cusps in $S_1$ or $S_2$ are $\Gamma_0(D)$ equivalent. 
%This is true as you can find a matrix in $\Gamma_0(D)$ which takes one cusp to the other. 
Next, by \cite[Proposition 6.3.20]{HC}, we have that any cusp in $S_1$ has width one and any cusp in $S_2$ has width $D$. Hence, any two $\Gamma_0(D)$ equivalent cusps have the same width in $\Gamma_1(D)$. Suppose $x_1$ and $x_2$ are two inequivaent cusps of $\Gamma_1(D)$ that are $\Gamma_0(D)$ equivalent. Let $w$ be their width. Then, as shown in the proof of \cite[Proposition 16]{Kob}, the smallest exponents in the $q_w$ expansion of $F_{D, r}(z)$ at the cusps $x_1$ and $x_2$ are the same. Since the widths of $x_1$ and $x_2$ are the same and equal to $w$, the smallest exponent with non-zero coefficients in $q_w$ expansion of $F_{D, r}(z)$ at the cusps $x_1$ and $x_2$ are the same. Hence $F_{D, r}(z)$ has the same order of vanishing in $\Gamma_1(D)$ at the cusps $x_1$ and $x_2$. Now since the order of vanishing of $i\infty$ and $1/1$ in $\Gamma_1(D)$ is equal to 1, and every cusp of $\Gamma_1(D)$ is $\Gamma_0(D)$ equivalent to either $i\infty$ or $1/1$, we are done. 
\end{proof}

We are ready to prove Theorem \ref{cn}.

\begin{proof}[Proof of Theorem \ref{cn}]
Considering $F_{D, r}(z)$ as a modular form on $\Gamma_1(D)$, by \lemref{valencef}, we have  $$
\sum_{z \in \Gamma \backslash {\mathbb{H}}^{*}} \frac{\nu_{z}(F_{D, r})}{e_{z}}=\frac{1}{12}[\bar{{\rm SL}_2(\mathbb{Z})}:\bar{\Gamma_1(D)}].
$$
By \cite[Corollary 6.2.13]{HC}, we have
$$
[\bar{{\rm SL}_2(\mathbb{Z})}:\bar{\Gamma_1(D)}]=\frac{D^2-1}{2}.
$$
Hence, we have
$$
\sum_{z \in \Gamma_1(D) \backslash {\mathbb{H}}^{*}} \frac{\nu_{z}(F_{D, r})}{e_{z}}=\frac{D^2-1}{24}.
$$
From \lemref{cuspall}, $\nu_z(F_{D, r})=1$ for each of the $D-1$ cusps of $\Gamma_1(D)$. Hence, 
$$
\frac{1}{D-1}\sum_{z \in \Gamma_1(D) \backslash \mathbb{H}} \frac{\nu_{z}(F_{D, r})}{e_{z}}=\frac{D-23}{24}.
$$
Therefore, for all values of $D$ except 23, the right-hand side will be positive, and the formula above will guarantee the existence of a zero in $\mathbb{H}$. Now applying \propref{unboundedlemma} on $F_{D, r}(z)/q$ completes the proof.
 \end{proof}

\section{Schoeneberg pairs and proof of Theorem \ref{Pair}}
\label{Section 6}
%For prime $D$, we first characterize the eta-quotients that are cusp forms of weight 1, level $D$, and character $\Leg{-D}{\cdot}$.

\begin{proposition}
\label{eta-23}
Let $D$ be a prime, and let $f(z) \in S_1\left(\Gamma_0(D), \Leg{-D}{\cdot}\right)$ be an eta quotient. Then, $D\equiv 23\pmod{24}$, and 
\begin{equation}
\label{eta-display}
f(z)=\eta(z) \eta(Dz)= q^{\frac{D+1}{24}}\prod_{n=1}^\infty (1-q^n)^{c(n)},
~~{\rm where}~~
c(n)= \begin{cases}
2& {\rm if}~ n\equiv 0~({\rm mod}~D),\\
1& {\rm Otherwise.}
\end{cases}
\end{equation}
%where
%$$c(n)= \begin{cases}
%2& {\rm if}~ n\equiv 0~({\rm mod}~D),\\
%1& {\rm Otherwise.}
%\end{cases}$$
\end{proposition}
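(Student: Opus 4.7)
My plan is to enumerate the possible exponents of an eta quotient of prime level $D$ and combine the Ligozat/Newman conditions for membership in $S_1\!\left(\Gamma_0(D), \Leg{-D}{\cdot}\right)$ with the cusp-form vanishing to pin down $(r_1, r_D)$ uniquely. Since $D$ is prime, its only positive divisors are $1$ and $D$, so any eta quotient of level $D$ has the form $f(z) = \eta(z)^{r_1}\eta(Dz)^{r_D}$, and the weight-$1$ constraint forces $r_1 + r_D = 2$.

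First I would handle the character. By the Nebentypus formula for eta quotients (e.g.\ \cite[Theorem 1.64]{O}), $f$ has character $\Leg{(-1)^k s}{\cdot}$ with $k=1$ and $s = D^{r_D}$. Matching against $\Leg{-D}{\cdot}$ requires $D^{r_D-1}$ to be a perfect square, so $r_D$ (and hence $r_1 = 2 - r_D$) is odd. Next, the Ligozat congruences
\begin{equation*}
r_1 + Dr_D \equiv 0 \pmod{24}, \qquad Dr_1 + r_D \equiv 0 \pmod{24},
\end{equation*}
combined with $r_1 = 2 - r_D$ and subtracted, give $2D + 2 \equiv 0 \pmod{24}$, i.e.\ $D \equiv 11 \pmod{12}$.

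The decisive step is invoking the cusp-form property. The orders of vanishing at the two cusps $\infty$ and $0$ of $\Gamma_0(D)$ are (up to normalization by the cusp width) $(r_1 + Dr_D)/24$ and $(Dr_1 + r_D)/24$ respectively, and both must be strictly positive. In terms of $r_D$ this reads $-2/(D-1) < r_D < 2D/(D-1)$, which for $D \geq 11$ restricts $r_D$ to $\{0, 1, 2\}$. Combined with $r_D$ odd, we must have $r_D = 1$, hence $r_1 = 1$. The first Ligozat congruence then specializes to $1 + D \equiv 0 \pmod{24}$, forcing $D \equiv 23 \pmod{24}$. Conversely, for such $D$, a direct check confirms that $\eta(z)\eta(Dz)$ satisfies all the conditions and belongs to $S_1\!\left(\Gamma_0(D), \Leg{-D}{\cdot}\right)$.

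Finally, the product expansion is immediate from $\eta(z)=q^{1/24}\prod_{n\ge 1}(1-q^n)$:
\begin{equation*}
\eta(z)\eta(Dz) = q^{(D+1)/24}\prod_{n\ge 1}(1-q^n)(1-q^{Dn}) = q^{(D+1)/24}\prod_{n\ge 1}(1-q^n)^{c(n)},
\end{equation*}
where one reads off $c(n)=2$ when $D \mid n$ and $c(n)=1$ otherwise by grouping factors according to divisibility by $D$. The main obstacle is the bookkeeping to weave together three seemingly independent conditions — the character identification, the pair of mod-$24$ congruences, and the strict positivity at both cusps — and rule out every pair $(r_1, r_D)$ other than $(1, 1)$. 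Once the cusp positivity collapses $r_D$ to a three-element set, the remaining verification is entirely mechanical.
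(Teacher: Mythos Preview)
Your argument is correct and follows essentially the same route as the paper's: reduce to $f=\eta^{r_1}(z)\eta^{r_D}(Dz)$ with $r_1+r_D=2$, impose the mod-$24$ order conditions at the two cusps of $\Gamma_0(D)$, and use strict positivity of these orders (cusp form) to pin down $(r_1,r_D)=(1,1)$ and hence $D\equiv 23\pmod{24}$. Two minor remarks: you write ``subtracted'' where \emph{adding} the two congruences (after substituting $r_1=2-r_D$) is what yields $2D+2\equiv 0\pmod{24}$; and the paper spends a line, via \cite[p.~880]{B}, justifying that an arbitrary eta quotient lying in $S_1(\Gamma_0(D),\Leg{-D}{\cdot})$ must actually have level dividing $D$, a step you take for granted. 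The only genuine strategic difference is that you invoke the Nebentypus identification to force $r_D$ odd and then intersect with $r_D\in\{0,1,2\}$, whereas the paper instead subtracts the two congruences to parametrize $(r_1,r_D)=(6\ell+1,\,1-6\ell)$ and lets positivity force $\ell=0$; both devices work, and yours is arguably a touch more direct.
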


\begin{proof}
Let $f(z)=\prod_{\delta\mid N} \eta(\delta z)^{r_\delta}\in S_1\left(\Gamma_0(D), \Leg{-D}{\cdot}\right)$ be an {eta quotient}. Since $f(z)$ satisfies the transformation property of a modular form of level $D$, and $D$ is prime, then the largest subgroup of  ${\rm SL}_2(\mathbb{Z})$ for which $f(z)$ transforms like a modular form is either $\Gamma_0(1)={\rm SL}_2(\mathbb{Z})$ or $\Gamma_0(D)$.
Hence, by \cite[p. 880]{B}, we conclude that ${\rm lcm}\{\delta;~\delta\mid N~{\rm and}~r_\delta\neq 0\}=1$ or $D$. Since $D$ is prime, we have 
$$f(z)=\eta^i(z)\eta^j(Dz),$$ where $i$ and $j$ are integers.  
%be an eta quotient in $S_1\left(\Gamma_0(D), \Leg{-D}{\cdot}\right)$. 

Now since $\eta(z)$ is a modular form of weight $1/2$, $f(z)= \eta^i(z)\eta^j(Dz)$ transforms like a modular form of weight $(i+j)/2$. On the other hand  we assumed that $f(z)$ has weight  1, hence
\begin{equation}\label{eta11}
i+j=2.
\end{equation}
For $f(z)$ to be holomorphic at the cusps $i\infty$ and $1/1$, by \cite[Theorem 1.65]{O}, we must have
\begin{equation}\label{eta12}
\frac{i+Dj}{24}\in \mathbb{N}\cup\{0\}
\end{equation}
and
\begin{equation}\label{eta13}
\frac{Di+j}{24}\in \mathbb{N}\cup\{0\}.
\end{equation}
By \eqref{eta11}, \eqref{eta12} and \eqref{eta13}, we have that
\begin{equation}\label{eta14}
D\equiv11\pmod{12}.
\end{equation}
We subtract \eqref{eta13} from \eqref{eta12} to get
\begin{equation}\label{eta15}
(1-D)(i-j)\equiv 0\pmod{24}.
\end{equation}
From \eqref{eta14} and \eqref{eta15}, we come to the conclusion that $i=6\ell+1$ and $j=-6\ell+1$, for an integer $\ell$. Thus, $f(z)=\eta^{6\ell+1}(z) \eta^{-6\ell+1}(Dz)$. Since $f(z)$ is a cusp form, it vanishes at $\infty$ and $1/1$, so \eqref{eta12} and  \eqref{eta13} can be written as 
\begin{equation}
\label{b0}
\frac{(D+1)+6\ell (1-D)}{24}\in \mathbb{N},~~~{\rm and}~~~ \frac{(D+1)-6\ell (1-D)}{24}\in \mathbb{N}.
\end{equation}
From the positivity of the numerators of the fractions in \eqref{b0}, we conclude that 
\begin{equation}
\label{b1}
|6\ell| < \frac{D+1}{D-1}.
\end{equation}
Since $\ell=0$ is the only solution of \eqref{b1}, from $i=6\ell+1$, $j=-6\ell+1$, and \eqref{b0}, we conclude that $i=j=1$ and $D\equiv 23$ (mod 24). Note that $\eta(z) \eta(Dz)\in  S_1\left(\Gamma_0(D), \Leg{-D}{\cdot}\right)$ by \cite[Theorem 1.65]{O}.
\end{proof}

We are now ready to prove our final result.
\begin{proof}[Proof of Theorem \ref{Pair}]
First assume $E(z)=F_{D, s, r}(z)$, where $E(z)$ is an eta-quotient of prime level $D$ and $F_{D, s, r}(z)$ is half the difference of two theta functions of  prime level $D$.  Since $D$ is a prime and $-D$ is a discriminant, then $D\equiv 3$ (mod 4). Hence, $h(-D)$ is odd and by Theorem \ref{main-1}, $F_{D, s, r}(z)=F_{D, r}(z)-F_{D, s}(z)$ is a cusp form of weight $1$, level $D$, and character $\Leg{-D}{.}$. Thus, by Proposition \ref{eta-23}, $D\equiv 23$ (mod 24) and $E(z)=\eta(z) \eta(Dz)$.

Conversely assume that $D\equiv 23$ (mod 24) is a prime. 
Then $-D$ is a fundamental discriminant and $h(-D)$ is odd.
Let $Q=(a,b,c)$ represent the reduced form $Q(x, y)=ax^2+bxy+cy^2$. Suppose that  $Q_{s}=(a,b,c)$ and $Q_{r}=(a^\prime,b^\prime,c^\prime)$ be a pair of distinct reduced forms of Discriminant $-D$ for which $F_{D, s, r}(z)$ is an eta quotient. Then, by  Theorem \ref{main-1} and Proposition \ref{eta-23}, $F_{D, s, r}(z)=\eta(z)\eta(Dz)$. By  \cite[Satz 1, p. 179]{Sch} such pair $(Q_s, Q_r)$ exists. We claim that up to proper equivalence and conjugation it is the Schoeneberg pair. We consider two cases.

Case 1: Assume that $a<a^\prime$. Without loss of generality, further assume that $b>0$ and $b^\prime>0$. Then since, by Lemma \ref{conjugate} (iii), $a$ is the smallest positive integer represented by $Q_{s}$ then the least exponent of $q$ in the $q$-expansion of $F_{D, s, r}(z)$ at $i\infty$ is $a$. Thus, as $F_{D, s, r}(z)$ is an eta-quotient, by Proposition \ref{eta-23}, we have $a=(D+1)/24$. Since $Q_{s}$ is a reduced form, we have $|b|\leq a\leq c$. Hence, 
\begin{equation}
\label{inequality}
\frac{(D+1)^2}{24^2}\geq b^2=4ac-D\geq \frac{4(D+1)^2}{24^2}-D.
\end{equation}
Here, we used that $a=(D+1)/24$ and $c\geq a$.
The inequality \eqref{inequality} implies $1\leq D\leq 189.99$. Using SageMath \cite{sage} we produce the list of reduced forms of prime discriminants $-D$ with $D\equiv 23\pmod{24}$ with $1\leq D\leq 189.99$. By examining them, we conclude that the Schoeneberg pairs listed in Table \ref{t1-intro} are the only pairs $(Q_s, Q_r)$ for which $F_{D, s, r}(z)$ is an eta quotient. 

Case 2: Assume that $a=a^\prime$. Hence, $Q_{s}=(a,b,c)$ and $Q_{r}=(a,b^\prime,c^\prime)$. Without loss of generality, further assume that $b>0$, $b^\prime>0$, and $c<c^\prime$. Since $c<c^\prime$, the least exponent of $q$ in $F_{D, s, r}(z)$ has to be equal to $c$ because, by Lemma \ref{conjugate} (iii), $c$ is the least integer represented by $Q_{s}$ that is not represented by $Q_{r}$. Since $F_{D, s, r}(z)$ is an eta-quotient, by \eqref{eta-display}, we have $c=(D+1)/24$.  Hence,
\begin{equation}
\label{inequality2}
4ac-D=b^2\leq a^2\leq \frac{D}{3}.
\end{equation}
(For the last inequality, see \cite[Equation (2.12)]{DC}.) For $a\geq 9$, the inequalty \eqref{inequality2} implies that $c<(D+1)/24$,
which is a contradiction. Hence, such a pair of forms does not exist if $a\geq 9$.

When $a=8$ and $c=(D+1)/24$, we have
$$b^2-\frac{4(D+1)}{3}=-D$$ which implies $b^2=(D+4)/3$. Since $c\geq a$, we have $(D+1)/24\geq 8$. Rearranging, we get $b^2=(D+4)/3\geq 65$, which is impossible as the form is reduced ($b^2\leq a^2=8^2=64$). 
%Hence, such a form cannot exist. 

When $a=7$ and $c=(D+1)/24$, we have
$$b^2-\frac{7(D+1)}{6}=-D$$ which implies $b^2=(D+7)/6$. Since $c\geq a$, we have $(D+1)/24\geq 7$. Rearranging, we get $(D+7)/6\geq 29$. Hence, $29\leq b^2\leq a^2=7^2=49$, which implies $b=6$ or $7$. If $b=7$, then $D=287$. This is not possible as $287$ is not prime. The case $b=6$ does not result in an integral value for $c$. 

Next assume that $a=6$ and $c=(D+1)/24$. Then $-D=b^2-4ac$
implies $b=1$ and therefore $Q_s=(6, 1,  (D+1)/24)$. Now 
$${b^\prime}^2-24c^\prime=-D.$$ Hence, $c^\prime=(D+{b^\prime}^2)/24.$ Using the fact that $c^\prime$ is an integer and $b^\prime\leq a^\prime=a=6$, we get $b^\prime=1$ or $5$. (Note that since $D$ is prime, then $b^\prime=2, 3, 4, 6$ is not possible.) Now if $b^\prime=1$, then $Q_s$ and $Q_r$ are not distinct, which contradicts our assumption. The remaining case $b^\prime=5$ results in Schoeneberg
pairs for all values of prime $D\equiv 23\pmod{24}$ such that $D\geq143$. 
%Note that for each prime $D<143$ with $D\equiv 23\pmod{24}$ we do not get a pair of reduced forms. 
%The reduced versions of these forms are obtained in Case 1.

Finally, for $1\leq a\leq 5$, and $c=(D+1)/24$, we have $$b^2\leq \frac{20(D+1)}{24}-D,$$ which only gives solutions for $b$ if $D\leq 5$. The only possible cases ($D=3,4$) do not satisfy $D\equiv 23\pmod{24}$.\end{proof}

\newpage
\section{Class numbers one, three, and five}
\label{appendix}
In this section, we illustrate explicitly the assertions of Theorem \ref{main-1} for primitive positive definite quadratic forms (or equivalently for imaginary quadratic fields) of class number one, three, and five. 
All computations are done with SageMath \cite{sage}.

\begin{itemize}
\item {\sl Class number 1}:
{Table \ref{B} records the discriminants $-D$ with class number one, the reduced form $Q(x, y)$, and number of roots of unity $w$.}

\begin{table}[ht]
%\footnotesize
\scriptsize
\centering
\begin{tabular}[t]{|c| c| c|}  \hline
 $-D$ & $Q(x, y)$ & $w$ \\ 
 \hline
 $-3$ & $x^2+xy+y^2$ & 6  \\ 
 \hline
 $-4$ & $x^2 + y^2$ & 4 \\
 \hline
 $-7$ & $x^2 + xy + 2y^2$ &2 \\
 \hline
 $-8$ & $x^2 + 2y^2$  &2  \\
 \hline
 $-11$ & $x^2 + xy + 3y^2$ &2  \\ 
 \hline
 $-19$ & $x^2 + xy + 5y^2$ &2   \\ 
 \hline
 $-43$ & $x^2 + xy + 11y^2$ &2   \\ 
 \hline
 $-67$ & $x^2 + xy + 17y^2$ &2   \\
 \hline
 $-163$ & $x^2 + xy + 41y^2$ &2 \\ 
 \hline

\end{tabular} \caption{Class number 1} \label{B}
\end{table}

%\newpage
\item {\sl Class number 3 (a)}: {Table \ref{A} records the discriminants $-D$ with class number three and  \\$\mathrm{dim}(S_1(\Gamma_0(D), \Leg{-D}{.})=1$, and reduced forms $Q_0(x, y)$, $Q_1(x, y)$. The cusp form $F_{D,1}(z)$ is given as  the $q$-expasion of the element of the basis of $S_1(\Gamma_0(D),\Leg{-D}{.})$ produced by SageMath.}

 \begin{table}[ht]
%\footnotesize
\scriptsize
\centering
\begin{tabular}[t]{|c| c| c| c|}  \hline
 $-D$ & $Q_0(x, y)$ & $Q_1(x, y)$  & $F_{D,1}(z)=\displaystyle{\sum_{n=1}^{\infty} t_1(n) q^n}$\\ 
 \hline
 $-23$ & $x^2 + xy + 6y^2$ & $2x^2 + xy + 3y^2$ & $q-q^2-q^3+q^6+q^8-q^{13}-q^{16}+q^{23}-q^{24} +q^{25}+O(q^{26})$\\ 
 \hline
 $-31$ & $x^2 + xy + 8y^2$ & $2x^2 + xy + 4y^2$  & $q -  q^{2} -  q^{5} -  q^{7} +  q^{8} + q^9 + q^{10} +q^{14} -q^{16} -q^{18} + O(q^{19})$ \\
 \hline
 $-59$ & $x^2 + xy + 15y^2$ & $3x^2 + xy + 5y^2$  & $q -  q^{3} +  q^{4} -  q^{5} -  q^{7} -  q^{12} +  q^{15} +  q^{16} + 2 q^{17} -  q^{19} +  O(q^{20})$\\
 \hline
 $-83$ & $x^2 + xy + 21y^2$ & $3x^2 + xy + 7y^2$  & $q -  q^{3} +  q^{4} -  q^{7} -  q^{11} -  q^{12} +  q^{16} -  q^{17} +  q^{21} + 2 q^{23} +  O(q^{25})$\\
 \hline
 $-107$ & $x^2 + xy + 27y^2$ & $3x^2 + xy + 9y^2$  & $q -  q^{3} +  q^{4} -  q^{11} -  q^{12} -  q^{13} +  q^{16} -  q^{19} -  q^{23} +  q^{25} +  O(q^{27})$\\ 
 \hline
 $-139$ & $x^2 + xy + 35y^2$ & $5x^2 + xy + 7y^2$ & $q +  q^{4} -  q^{5} -  q^{7} +  q^{9} -  q^{11} -  q^{13} +  q^{16} -  q^{20} -  q^{28}  +  O(q^{29})$\\ 
 \hline
 $-211$ & $x^2 + xy + 53y^2$ & $5x^2 + 3xy + 11y^2$  & $ q +  q^{4} -  q^{5} +  q^{9} -  q^{11} -  q^{13} +  q^{16} -  q^{19} -  q^{20} +  q^{36} +  O(q^{37})$\\ 
 \hline
 $-307$ & $x^2 + xy + 77y^2$ & $7x^2 + xy + 11y^2$ & $q +  q^{4} -  q^{7} +  q^{9} -  q^{11} +  q^{16} -  q^{17} -  q^{19} +  q^{25} -  q^{28} +  O(q^{36} )$\\ 
 \hline
 $-379$ & $x^2 + xy + 95y^2$ & $5x^2 + xy + 19y^2$  & $q +  q^{4} -  q^{5} +  q^{9} +  q^{16} -  q^{19} -  q^{20} -  q^{23} +  q^{36} -  q^{37} + O(q^{41} )$\\
 \hline
$-499$ & $x^2 + xy + 125y^2$ & $5x^2 + xy + 25y^2$  & $ q +  q^{4} -  q^{5} +  q^{9} +  q^{16} -  q^{20} -  q^{29} -  q^{31} +  q^{36} -  q^{43} +  O(q^{45})$\\ 
 \hline
 $-547$ & $x^2 + xy + 137y^2$ & $11x^2 + 5xy + 13y^2$  & $q +  q^{4} +  q^{9} -  q^{11} -  q^{13} +  q^{16} -  q^{19} +  q^{25} -  q^{29} +  q^{36} +  O(q^{44})$\\ 
 \hline
$-883$ & $x^2 + xy + 221y^2$ & $13x^2 + xy + 17y^2$ & $q +  q^{4} +  q^{9} -  q^{13} +  q^{16} -  q^{17} +  q^{25} -  q^{29} -  q^{31} +  q^{36} +  O(q^{49} )$\\ 
 \hline
$-907$ & $x^2 + xy + 227y^2$ & $13x^2 + 9xy + 19y^2$ & $q +  q^{4} +  q^{9} -  q^{13} +  q^{16} -  q^{19} -  q^{23} +  q^{25} +  q^{36} -  q^{41} +  O(q^{49})$\\ \hline  
\end{tabular} \caption{Class number 3 (a)} \label{A} \end{table}

%\newpage

\item  {\sl Class number 3 (b):} 
{Table \ref{AA} records discriminants $-D$ with class number three and\\  
$\mathrm{dim}(S_1(\Gamma_0(D),\Leg{-D}{.})=3$, and reduced forms $Q_0(x, y)$, $Q_1(x, y)$. The cusp form $F_{D,1}(z)$ is $\mathcal{S}_1+\mathcal{S}_3$ where $\mathcal{S}_1,\mathcal{S}_2,\mathcal{S}_3$ are elements of the basis of $S_1(\Gamma_0(D),\Leg{-D}{.})$ produced by SageMath.}
 
 \begin{table}[ht]
%\footnotesize
\scriptsize
\centering
\begin{tabular}[t]{|c| c| c| c|}  \hline
 $-D$ & $Q_0(x, y)$ & $Q_1(x, y)$  & $F_{D,1}(z)=\displaystyle{\sum_{n=1}^{\infty} t_1(n) q^n}$\\ 
 \hline
 $-283$ & $x^2 + xy + 71y^2$ & $7x^2 + 5xy + 11y^2$  & $q+q^4-q^7+q^9-q^{11}-q^{13}+q^{16}-q^{23}+q^{25}-q^{28}+O(q^{29})$\\
 \hline
 $-331$ & $x^2 + xy + 83y^2$ & $5x^2 + 3xy + 17y^2$  & $q+q^4-q^5+q^9+q^{16}-q^{17}-q^{19}-q^{20}-q^{31}+q^{36}+O(q^{43})$\\ 
 \hline
$-643$ & $x^2 + xy + 161y^2$ & $7x^2 + xy + 23y^2$  & $q+q^4-q^7+q^9+q^{16}-q^{23}+q^{25}-q^{28}-q^{29}-q^{31}+O(q^{36})$\\
 \hline
\end{tabular} \caption{Class number 3 (b)} \label{AA}
\end{table}

\newpage
\item {\sl Class number 5:} {Table \ref{C} records discriminants $-D$ with class number five, \\$\mathrm{dim}(S_1(\Gamma_0(D),\Leg{-D}{.})=2$, and
reduced forms $Q_0(x, y)$, $Q_1(x, y)$, $Q_2(x, y)$. The cusp form $F_{D, 1}(z)$  is $\mathcal{S}_1-\mathcal{S}_2$ and $F_{D,2}(z)$ is $\mathcal{S}_1$, where    
$\mathcal{S}_1$ and $\mathcal{S}_2$ are elements of the basis of $S_1(\Gamma_0(D),\Leg{-D}{.})$  produced by SageMath.}

 \begin{table}[ht]
%\footnotesize
\scriptsize
\centering
\begin{tabular}[t]{|c|l|l|l|l|}  \hline
 $-D$ & $Q_0(x, y)$ & $Q_1(x, y)$ & $Q_2(x, y)$  & $F_{D,1}(z)=\displaystyle{\sum_{n=1}^{\infty} t_1(n) q^n},~ F_{D,2}(z)=\displaystyle{\sum_{n=1}^{\infty} t_2(n) q^n}$  \\ 
 \hline
 $-47$ & $x^2 + xy + 12y^2$ & $2x^2 + xy + 6y^2$ & $3x^2 + xy + 4y^2$ & \begin{tabular}{@{}c@{}}$q-q^2+q^4-q^6-q^7-q^8+q^{12}+q^{18}+O(q^{21})$ \\ $q-q^3-q^6-q^8+q^9+q^{12}+q^{14}-q^{17}+O(q^{18})$ \end{tabular}  \\ 
 \hline
 $-79$ & $x^2 + xy + 20y^2$ & $2x^2 + xy + 10y^2$  & $4x^2 + xy + 5y^2$ & \begin{tabular}{@{}c@{}}$q-q^2+q^4-q^8+q^9-q^{10}-q^{13}-q^{18}+O(q^{20})$ \\ $q - q^5 - q^8 + q^9 - q^{10} - q^{19} + q^{20} + q^{22} + O(q^{23})$ \end{tabular} \\
 \hline
 $-103$ &  $x^2 + xy + 26y^2$ & $2x^2 + xy + 13y^2$  & $4x^2 + 3xy + 7y^2$ & \begin{tabular}{@{}c@{}}$q-q^2+q^4-q^8+q^9-q^{13}-q^{14}-q^{18}+O(q^{19})$ \\ $q - q^7 - q^8 + q^9 - q^{14} - q^{17} + q^{25} + q^{26} + O(q^{28})$ \end{tabular}\\
 \hline
 $-127$ &  $x^2 + xy + 32y^2$ & $2x^2 + xy + 16y^2$  & $4x^2 + xy + 8y^2$ &\begin{tabular}{@{}c@{}}$q-q^2+q^4-q^8+q^9-q^{17}-q^{18}-q^{19}+O(q^{22})$ \\ $q -  q^{8} +  q^{9} -  q^{11} -  q^{13} -  q^{22} +  q^{25} -  q^{26} +  O(q^{32})$ \end{tabular}\\
 \hline
   $-131$ &  $x^2 + xy + 33y^2$ & $3x^2 + xy + 11y^2$  & $5x^2 + 3xy + 7y^2$ & \begin{tabular}{@{}c@{}}$q-q^3+q^4+q^9-q^{11}-q^{12}-q^{13}-q^{15}+O(q^{16})$ \\ $q +  q^{4} -  q^{5} -  q^{7} -  q^{15} +  q^{16} -  q^{20} -  q^{21} +  O(q^{25})$ \end{tabular} \\ 
 \hline
 $-179$ &  $x^2 + xy + 45y^2$ & $3x^2 + xy + 15y^2$  & $5x^2 + xy + 7y^2$ &\begin{tabular}{@{}c@{}}$q-q^3+q^4+q^9-q^{12}-q^{15}+q^{16}-q^{17}+O(q^{19})$ \\ $q +  q^{4} -  q^{5} -  q^{13} -  q^{15} +  q^{16} -  q^{20} +  q^{25} +  O(q^{27})$ \end{tabular}\\  
 \hline
 $-227$ &  $x^2 + xy + 57y^2$ & $3x^2 + xy + 19y^2$  & $7x^2 + 5xy + 9y^2$ &\begin{tabular}{@{}c@{}}$q-q^3+q^4+q^9-q^{12}+q^{16}-q^{19}-q^{21}+O(q^{23})$ \\ $q +  q^{4} -  q^{7} -  q^{11} +  q^{16} -  q^{21} +  q^{25} -  q^{27} +  O(q^{28})$ \end{tabular}\\ 
 \hline
 $-347$ &  $x^2 + xy + 87y^2$ & $3x^2 + xy + 29y^2$  & $9x^2 + 7xy + 11y^2$ &\begin{tabular}{@{}c@{}}$q-q^3+q^4+q^9-q^{12}+q^{16}+q^{25}-q^{27}+O(q^{29})$ \\ $q +  q^{4} -  q^{11} -  q^{13} +  q^{16} +  q^{25} -  q^{27} -  q^{33} +  O(q^{39})$ \end{tabular}\\
 \hline
 $-443$ &  $x^2 + xy + 111y^2$ & $3x^2 + xy + 37y^2$  & $9x^2 + 5xy + 13y^2$ &\begin{tabular}{@{}c@{}}$q-q^3+q^4+q^9-q^{12}+q^{16}+q^{25}-q^{27}+O(q^{36})$ \\ $q +  q^{4} -  q^{13} +  q^{16} -  q^{17} +  q^{25} -  q^{27} -  q^{39} +  O(q^{49})$ \end{tabular}\\ 
 \hline
 $-523$ & $x^2 + xy + 131y^2$ & $7x^2 + 3xy + 19y^2$  & $11x^2 + 7xy + 13y^2$ & \begin{tabular}{@{}c@{}}$q+q^4-q^7+q^9+q^{16}-q^{19}-q^{23}+q^{25}+O(q^{28})$ \\ $q +  q^{4} +  q^{9} -  q^{11} -  q^{13} +  q^{16} -  q^{17} +  q^{25} +  O(q^{31})$ \end{tabular} \\ 
 \hline
$-571$ & $x^2 + xy + 143y^2$  & $5x^2 + 3xy + 29y^2$ & $11x^2 + xy + 13y^2$ & \begin{tabular}{@{}c@{}}$q + q^4 - q^5 + q^9 + q^{16} -q^{20} +q^{25} -q^{29}  + O(q^{31})$ \\ $q +  q^{4} +  q^{9} -  q^{11} -  q^{13} +  q^{16} -  q^{23} +  q^{36}  +  O(q^{44})$ \end{tabular}\\ 
 \hline
 $-619$ & $x^2 + xy + 155y^2$ & $5x^2 + xy + 31y^2$  & $7x^2 + 5xy + 23y^2$ & 
  \begin{tabular}{@{}c@{}}$q + q^4 - q^5 +q^9 + q^{16} -q^{20} +q^{25} -q^{31}  + O(q^{35})$ \\ $q +  q^{4} -  q^{7} +  q^{9} +  q^{16} -  q^{23} -  q^{28} -  q^{35} +  O(q^{36})$ \end{tabular}\\ 
 \hline
  $-683$ & $x^2 + xy + 171y^2$ & $3x^2 + xy + 57y^2$  & $9x^2 + xy + 19y^2$ & 
   \begin{tabular}{@{}c@{}}$q - q^3 +q^4 +q^9 -q^{12} + q^{16} +q^{25} -q^{27} + O(q^{36})$ \\ $q +  q^{4} +  q^{16} -  q^{19} +  q^{25} -  q^{27} -  q^{29} +  q^{49}  +  O(q^{50})$ \end{tabular}\\ 
 \hline
 $-691$ & $x^2 + xy + 173y^2$ & $5x^2 + 3xy + 35y^2$  & $7x^2 + 3xy + 25y^2$  &
   \begin{tabular}{@{}c@{}}$q +q^4 -q^5 +q^9 +q^{16} -q^{20} +q^{25} -q^{35} + O(q^{36})$ \\ $q +  q^{4} -  q^{7} +  q^{9} +  q^{16} -  q^{28} -  q^{29} -  q^{35}  +  O(q^{36})$ \end{tabular}\\ 
 \hline
 $-739$ & $x^2 + xy + 185y^2$ & $5x^2 + xy + 37y^2$  & $11x^2 + 3xy + 17y^2$ &
   \begin{tabular}{@{}c@{}}$q +q^4 -q^5 +q^9 +q^{16} -q^{20} +q^{25} +q^{36} + O(q^{37})$ \\ $q +  q^{4} +  q^{9} -  q^{11} +  q^{16} -  q^{17} -  q^{31} +  q^{36} +  O(q^{44})$ \end{tabular}\\ 
 \hline
 $-787$ & $x^2 + xy + 197y^2$ & $7x^2 + 5xy + 29y^2$  & $11x^2 + 7xy + 19y^2$ & 
 \begin{tabular}{@{}c@{}}$q + q^4 - q^7 +q^9 +q^{16} +q^{25} -q^{28} -q^{29} + O(q^{31})$ \\ $q +  q^{4} +  q^{9} -  q^{11} +  q^{16} -  q^{19} -  q^{23} +  q^{25} +  O(q^{36})$ \end{tabular}\\ 
 \hline
 $-947$& $x^2 + xy + 237y^2$ & $3x^2 + xy + 79y^2$ & $9x^2 + 5xy + 27y^2$ &
  \begin{tabular}{@{}c@{}}$q-q^3+q^4 +q^9 -q^{12} +q^{16} +q^{25} -q^{27} + O(q^{36})$ \\ $ q + q^4 + q^{16} + q^{25} - q^{27} - q^{31} - q^{41} + q^{49}  +  O(q^{50})$ \end{tabular}\\ 
 \hline
 $-1051$ & $x^2 + xy + 263y^2$ & $5x^2 + 3xy + 53y^2$ & $11x^2 + 7xy + 25y^2$ & 
  \begin{tabular}{@{}c@{}}$q + q^4 - q^5 + q^9 +q^{16} -q^{20} +q^{25} +q^{36} + O(q^{45})$ \\ $ q + q^4 + q^9 - q^{11} + q^{16} - q^{29} + q^{36} - q^{43}  +  O(q^{49})$ \end{tabular}\\ 
 \hline
$-1123$ & $x^2 + xy + 281y^2$ & $7x^2 + 5xy + 41y^2$  & $17x^2 + 13xy + 19y^2$ &
  \begin{tabular}{@{}c@{}}$q+ q^4 -q^7 +q^9 +q^{16}+q^{25} - q^{28} + q^{36} + O(q^{41})$ \\ $ q + q^4 + q^9 + q^{16} - q^{17} - q^{19} - q^{23} + q^{25}   +  O(q^{36})$ \end{tabular}\\ 
 \hline
 $-1723$ & $x^2 + xy + 431y^2$  & $11x^2 + 9xy + 41y^2$ & $19x^2 + 5xy + 23y^2$ &  \begin{tabular}{@{}c@{}}$q + q^4 + q^9 -q^{11} +q^{16} +q^{25} +q^{36}  -q^{41} + O(q^{43})$ \\ $ q+ q^4 + q^9 +q^{16} -q^{19} -q^{23} +q^{25} +q^{36}  +  O(q^{37})$ \end{tabular}\\ 
 \hline
  $-1747$ & $x^2 + xy + 437y^2$ & $17x^2 + 15xy + 29y^2$ & $19x^2 + xy + 23y^2$ &\begin{tabular}{@{}c@{}}$q + q^4 + q^9 + q^{16} -q^{17} +q^{25} -q^{29} -q^{31} + O(q^{36})$ \\ $ q + q^4 + q^9 + q^{16} -q^{19} -q^{23} +q^{25} +q^{36} +  O(q^{41})$ \end{tabular}\\ 
 \hline
  $-1867$   & $x^2 + xy + 467y^2$ & $7x^2 + 3xy + 67y^2$  & $11x^2 + 5xy + 43y^2$ &
  \begin{tabular}{@{}c@{}}$q + q^4 - q^7 + q^9 +q^{16} + q^{25} -q^{28} + q^{36}  + O(q^{49})$ \\ $ q + q^4 + q^9 -q^{11} +q^{16} +q^{25} + q^{36} -q^{43} + O(q^{44})$ \end{tabular}\\ 
 \hline
   $-2203$ & $x^2 + xy + 551y^2$   & $7x^2 + 3xy + 79y^2$ & $19x^2 + xy + 29y^2$ &
   \begin{tabular}{@{}c@{}}$q + q^4 -q^7 + q^9 +q^{16} + q^{25} -q^{28} +q^{36} + O(q^{49})$ \\ $q + q^4 +q^9 +q^{16} -q^{19} +q^{25} -q^{29} + q^{36}  +  O(q^{47})$ \end{tabular}\\ 
 \hline
  $-2347$ & $x^2 + xy + 587y^2$   & $17x^2 + 13xy + 37y^2$ & $19x^2 + 3xy + 31y^2$ &
  \begin{tabular}{@{}c@{}}$q+ q^4 + q^9 + q^{16} -q^{17} +q^{25} +q^{36} -q^{37}  + O(q^{41})$ \\ $ q+ q^4 + q^9 + q^{16} - q^{19} +q^{25} -q^{31} +q^{36} +  O(q^{47})$ \end{tabular}\\ 
 \hline
  $-2683$ & $x^2 + xy + 671y^2$ & $11x^2 + xy + 61y^2$ & $23x^2 + 13xy + 31y^2$ &  \begin{tabular}{@{}c@{}}
   $q + q^4 + q^{9} -q^{11} +q^{16} +q^{25} +q^{36} -q^{44} + O(q^{49})$ \\ $ q +q^4 +q^9 +q^{16} -q^{23} +q^{25} -q^{31} +q^{36}+  O(q^{41})$ \label{C} \end{tabular}\\ 
 \hline

 \end{tabular} \caption{Class number 5} \label{C}
\end{table}

\end{itemize}

\newpage
\subsection*{Acknowledgement.} The authors would like to thank Pieter Moree for comments and useful correspondences related to this work.
% We thank the anonymous referee for pointing out some errors and making useful suggestions that improved the exposition of the article.

\begin{rezabib} 

\bib{ARW}{article}{
   author={Arno, Steven},
   author={Robinson, M. L.},
   author={Wheeler, Ferrell S.},
   title={Imaginary quadratic fields with small odd class number},
   journal={Acta Arith.},
   volume={83},
   date={1998},
   number={4},
   pages={295--330},
   issn={0065-1036},
   review={\MR{1610549}},
   doi={10.4064/aa-83-4-295-330},
}

\bib{B}{article}{
   author={Bhattacharya, Soumya},
   title={Finiteness of simple holomorphic eta quotients of a given weight},
   journal={Adv. Math.},
   volume={308},
   date={2017},
   pages={879--895},
   issn={0001-8708},
   review={\MR{3600077}},
   doi={10.1016/j.aim.2016.12.010},
}
\bib{VDB}{article}{
   author={van der Blij, F.},
   title={Binary quadratic forms of discriminant $-23$.},
   journal={Nederl. Akad. Wetensch. Proc. Ser. A. {\bf 55} = Indagationes
   Math.},
   date={1952},
   pages={498--503},
   review={\MR{0052462}},
}

\bib{HC}{book}{
   author={Cohen, Henri},
   author={Str\"{o}mberg, Fredrik},
   title={Modular forms},
   series={Graduate Studies in Mathematics},
   volume={179},
   note={A classical approach},
   publisher={American Mathematical Society, Providence, RI},
   date={2017},
   pages={xii+700},
   isbn={978-0-8218-4947-7},
   review={\MR{3675870}},
   doi={10.1090/gsm/179},
}

\bib{DC}{book}{
   author={Cox, David A.},
   title={Primes of the form $x^2+ny^2$---Fermat, class field theory, and
   complex multiplication},
   edition={3},
   note={With contributions by Roger Lipsett},
   publisher={AMS Chelsea Publishing, Providence, RI},
   date={[2022] \copyright 2022},
   pages={xv+533},
   isbn={[9781470470289]},
   isbn={[9781470471835]},
   review={\MR{4502401}},
}

\bib{FD}{book}{
   author={Diamond, Fred},
   author={Shurman, Jerry},
   title={A first course in modular forms},
   series={Graduate Texts in Mathematics},
   volume={228},
   publisher={Springer-Verlag, New York},
   date={2005},
   pages={xvi+436},
   isbn={0-387-23229-X},
   review={\MR{2112196}},
}

\bib{ES}{article}{
   author={Eholzer, Wolfgang},
   author={Skoruppa, Nils-Peter},
   title={Product expansions of conformal characters},
   journal={Phys. Lett. B},
   volume={388},
   date={1996},
   number={1},
   pages={82--89},
   issn={0370-2693},
   review={\MR{1418608}},
   doi={10.1016/0370-2693(96)01154-9},
}

\bib{H}{article}{
   author={Huber, Tim},
   author={Liu, Chang},
   author={McLaughlin, James},
   author={Ye, Dongxi},
   author={Yuan, Miaodan},
   author={Zhang, Sumeng},
   title={On the vanishing of the coefficients of CM eta quotients},
   journal={Proc. Edinb. Math. Soc. (2)},
   volume={66},
   date={2023},
   number={4},
   pages={1202--1216},
   issn={0013-0915},
   review={\MR{4679221}},
   doi={10.1017/s0013091523000627},
}

\bib{Kob}{book}{
   author={Koblitz, Neal},
   title={Introduction to elliptic curves and modular forms},
   series={Graduate Texts in Mathematics},
   volume={97},
   edition={2},
   publisher={Springer-Verlag, New York},
   date={1993},
   pages={x+248},
   isbn={0-387-97966-2},
   review={\MR{1216136}},
   doi={10.1007/978-1-4612-0909-6},
}

%\bib{LO}{article}{
%   author={Lemke Oliver, Robert J.},
%   title={Eta-quotients and theta functions},
%   journal={Adv. Math.},
%   volume={241},
%   date={2013},
%   pages={1--17},
%   issn={0001-8708},
%   review={\MR{3053701}},
%   doi={10.1016/j.aim.2013.03.019},
%}

\bib{O}{book}{
   author={Ono, Ken},
   title={The web of modularity: arithmetic of the coefficients of modular
   forms and $q$-series},
   series={CBMS Regional Conference Series in Mathematics},
   volume={102},
   publisher={Conference Board of the Mathematical Sciences, Washington, DC;
   by the American Mathematical Society, Providence, RI},
   date={2004},
   pages={viii+216},
   isbn={0-8218-3368-5},
   review={\MR{2020489}},
}

\bib{sage}{manual}{
      author={Developers, The~Sage},
       title={{S}agemath, the {S}age {M}athematics {S}oftware {S}ystem
  ({V}ersion 5.46.0)},
        date={2023},
       % note={{\tt https://www.sagemath.org}},
}

\bib{JPS}{book}{
   author={Serre, Jean-Pierre},
   title={Lectures on $N_X (p)$},
   series={Chapman \& Hall/CRC Research Notes in Mathematics},
   volume={11},
   publisher={CRC Press, Boca Raton, FL},
   date={2012},
   pages={x+163},
   isbn={978-1-4665-0192-8},
   review={\MR{2920749}},
}

\bib{Sch}{article}{
   author={Schoeneberg, B.},
   title={Bemerkungen \"{u}ber einige Klassen von Modulformen},
   note={Nederl. Akad. Wetensch. Proc. Ser. A {\bf 70}},
   language={German},
   journal={Indag. Math.},
   volume={29},
   date={1967},
   pages={177--182},
   review={\MR{0210674}},
}

\bib{stark}{article}{
   author={Stark, H. M.},
   title={Values of $L$-functions at $s=1$. I. $L$-functions for quadratic
   forms},
   journal={Advances in Math.},
   volume={7},
   date={1971},
   pages={301--343 (1971)},
   issn={0001-8708},
   review={\MR{0289429}},
   doi={10.1016/S0001-8708(71)80009-9},
}

%\bib{titch}{book}{
%   author={Titchmarsh, E. C.},
%   title={The theory of functions},
%   edition={2},
%   publisher={Oxford University Press, Oxford},
%   date={1939},
%   pages={x+454},
%   review={\MR{3728294}},
%}
%
%

\bib{W}{article}{
   author={Walling, Lynne H.},
   title={A remark on differences of theta series},
   journal={J. Number Theory},
   volume={48},
   date={1994},
   number={2},
   pages={243--251},
   issn={0022-314X},
   review={\MR{1285542}},
   doi={10.1006/jnth.1994.1065},
}

\bib{XWDP}{book}{
   author={Wang, Xueli},
   author={Pei, Dingyi},
   title={Modular forms with integral and half-integral weights},
   publisher={Science Press Beijing, Beijing; Springer, Heidelberg},
   date={2012},
   pages={x+432},
   isbn={978-7-03-033079-6},
   isbn={978-3-642-29301-6},
   isbn={978-3-642-29302-3},
   review={\MR{3015123}},
   doi={10.1007/978-3-642-29302-3},
}

\bib{Wa}{article}{
   author={Watkins, Mark},
   title={Class numbers of imaginary quadratic fields},
   journal={Math. Comp.},
   volume={73},
   date={2004},
   number={246},
   pages={907--938},
   issn={0025-5718},
   review={\MR{2031415}},
   doi={10.1090/S0025-5718-03-01517-5},
}

\bib{Z}{book}{
   author={Zagier, D. B.},
   title={Zetafunktionen und quadratische K\"orper},
   language={German},
   series={Hochschultext. [University Textbooks]},
   note={Eine Einf\"uhrung in die h\"ohere Zahlentheorie. [An introduction
   to higher number theory]},
   publisher={Springer-Verlag, Berlin-New York},
   date={1981},
   pages={viii+144},
   isbn={3-540-10603-0},
   review={\MR{0631688}},
}

\end{rezabib}

\end{document}